\documentclass{amsart}
\usepackage[bbgreekl]{mathbbol}
\usepackage{amssymb,amsmath,amsthm}
\usepackage[foot]{amsaddr}
\title{Apartness and the elimination of strong forms of extensionality}
\author{Benno van den Berg$^1$}
\address{${}^1$ ILLC, Universiteit van Amsterdam, P.O. Box 94242, 1090 GE Amsterdam, the Netherlands. E-mail: bennovdberg@gmail.com.}
\date{\today}
\usepackage{ast2}
\usepackage[margin=1.7in]{geometry}

\usepackage{tikz-cd}

\newcommand{\kcom}{\mathsf{k}}
\newcommand{\scom}{\mathsf{s}}
\newcommand{\Rcom}{\mathsf{R}}

\newcommand{\fst}{\mathsf{fst}}
\newcommand{\snd}{\mathsf{snd}}

\newcommand{\pair}{\mathsf{pair}}

\newcommand{\EXT}{\mathsf{EXT}}
\newcommand{\CEXT}{\mathsf{CEXT}}

\newcommand{\usftext}[1]{\textsf{\upshape #1}}

\newcommand{\ha}{\ensuremath{{\usftext{HA}}^\omega}} 
\newcommand{\eha}{\ensuremath{{\usftext{E-HA}}^\omega}} 
\newcommand{\mr}{\ensuremath{{\usftext{mr}}}}

\newcommand{\heha}{\ensuremath{{\usftext{HE-HA}}^\omega}}

\newcommand{\hha}{\ensuremath{{\usftext{H-HA}}^\omega}}

\newcommand{\hazero}{\ensuremath{{\usftext{HA}}_0^\omega}}

\newcommand{\weha}{\ensuremath{{\usftext{WE-HA}}^\omega}}

\newcommand{\ext}{\ensuremath{{\usftext{ext}}}} 
\newcommand{\forallext}{\forall^{\ext{}}\!}
\newcommand{\existsext}{\exists^{\ext{}}\!}

\begin{document}

\begin{abstract}
    We introduce a new version of arithmetic in all finite types which extends the usual versions with primitive notions of extensionality and extensional equality. This new hybrid version allows us to formulate a strong form of extensionality, which we call converse extensionality. Inspired by Brouwer's notion of apartness, we show that converse extensionality can be eliminated in a way which improves on results from our previous work. We also explain how standard proof-theoretic interpretations, like realizability and functional interpretations, can be extended to such hybrid systems, and how that might be relevant to proof-mining.
\end{abstract}

\maketitle

\section{Introduction}

The standard approach towards the semantics of constructive logic is to explain the meaning of a mathematical statement in terms of what counts as evidence in its favour. This idea is the starting point for the Brouwer-Heyting-Kolmogorov interpretation, various realizability interpretations and type theory. 

However, occasionally one also finds the idea of explaining the meaning of a proposition in terms of ``counter-evidence'': a piece of data that serves as evidence \emph{against} a proposition. Examples of this are Brouwer's notion of apartness and G\"odel's Dialectica interpretation.

Brouwer came up with his notion of apartness in order to explain what it means that two real numbers $r$ and $s$ are equal. This is not so easy; however, saying what is needed to show that they are distinct (apart, in Brouwer's terminology) is much easier. Indeed, this one does by providing two non-overlapping intervals with rational endpoints $q_0,q_1,q_2,q_3$ such that $r \in [q_0,q_1]$ and $s \in [q_2,q_3]$. Brouwer then defines equality of two reals as the impossibility of obtaining evidence for their apartness.

With respect to (extensional) equality of functions, we are in a similar situation. It is easy to see what should be a counterargument against their equality, and much less clear how one provides an argument in favour of their equality. Indeed, to give a counterargument against the equality of functions $f$ and $g$ means to point to an input on which they yield distinct outputs. Given this, equality of the functions $f$ and $g$ can then be defined as the impossibility of any counterargument against their equality. 

If one takes the idea of counter-evidence seriously, it is natural to say that for an implication $\varphi \to \psi$ to be valid, not only should evidence for $\varphi$ give rise to evidence for $\psi$ (as in the traditional account), but counter-evidence for $\psi$ should also give rise to counter-evidence for $\varphi$. And this is what happens in G\"odel's Dialectica interpretation. One can see this clearly when one looks at the Dialectica interpretation of the following extensionality principle:
\begin{equation} \label{extprinciple} 
    \begin{aligned}
    \forall f:(\sigma \to 0) \to (\tau \to 0).\forall x,y:\sigma \to 0.\\ \big( \, (\forall u:\sigma.xu =_0 yu) \to (\forall v:\tau.fxv =_0 fyv) \, \big).
    \end{aligned}
\end{equation}
(Here 0 stands for the type of natural numbers.) This says that the functional $f$ preserves extensional equality: if you give it functions $x$ and $y$ as input and these are extensionally equal, then the outputs should also be extensionally equal. Note that counter-evidence against the equality of $x$ and $y$ is a $u:\sigma$ such that $xu \not=_0 yu$ and counter-evidence against the equality of $fx$ and $fy$ is a $v:\tau$ such that $fxv \not=_0 fyv$. So if counter-evidence flows backwards along an implication, it should be possible to find such a $u$ from such a $v$. That is, there should be a functional $Z$ such that
\begin{equation} \label{convextprinciple} 
    \begin{aligned}
    \forall f:(\sigma \to 0) \to (\tau \to 0).\forall x,y:\sigma \to 0.\forall v:\tau. \\ fxv \not=_0 fyv \to x(Zfxyv) \not=_0 y(Zfxyv);
    \end{aligned}
\end{equation}
this is essentially the Dialectica interpretation of the original statement. (Indeed, it is equivalent to it, since $=_0$ is decidable.) In earlier work we referred to (\ref{convextprinciple}) as a \emph{converse extensionality principle}. It is a strong form of extensionality, which implies, but is not implied by, the extensionality principle (\ref{extprinciple}). This means that if we wish to Dialectica interpret extensionality, we are asked to witness converse extensionality, which is not always possible. Indeed, one can find some striking limitative results in the appendix of \cite{Troelstra344}, written by W.A.~Howard.

What is not excluded by these results, however, is that converse extensionality can be eliminated in a manner similar to the well-known elimination of extensionality due to Gandy and Luckhardt (see \cite[Section 10.4]{kohlenbach08}); by this we mean that we can interpret systems with such extensonality principles in systems which do not possess them. Indeed, the main contribution of this paper is to show that that is indeed possible for converse extensionality using precisely Brouwer's notion of apartness: this elimination procedure will be called the \emph{$\alpha$-translation} ($\alpha$ for apartness).

This is not the first time that the we have looked at apartness and converse extensionality. Indeed, the contents of this paper are similar to that of \cite{bergpassmann22}, written together with Robert Passmann. In the conclusion to that paper we suggested that it might be possible to obtain stronger results if one could have non-extensional witnesses for principles like converse extensionality. In this paper we show that that is indeed the case. As a result, we manage to eliminate converse extensionality for all finite types, instead of for very low types only; in addition, we will do this without using continuity principles.

The main obstacle that we needed to overcome in order to have non-extensional witnesses for converse extensionality is that we had to find a good setting where extensional and non-extensional objects can peacefully co-exist. To do this, we will introduce a hybrid system for arithmetic. By this we mean an extension of a neutral system like $\ha$ with primitive notions of extensionality and extensional equality for which we postulate some axioms. Since the principle of converse extensionality is most naturally understood as talking about the interaction between extensional and non-extensional objects, this will be the right setting to formulate it.

The contents of this paper are therefore as follows. We will introduce our hybrid system in Section 2. In Section 3 we will present a useful technical result concerning the extensionality axiom. In Section 4 we will show that Gandy and Luckhardt's elimination of extensionality can be seen as a two-step process with our hybrid system as an intermediate destination. In Section 5 we will present our $\alpha$-translation and show how it eliminates converse extensionality. In Section 6 we will briefly discuss how proof-theoretic interpretations like modified realizability and functional interpretations can be extended to hybrid systems. Finally, we conclude the paper with some discussion and directions for future research in Section 7.

\section{A hybrid system for arithmetic}

In this section we will introduce our hybrid system for arithmetic in finite types. It extends the standard system $\ha$ (as in \cite{TroelstraVanDalen88ii}) with primitive notions of extensionality and extensional equality for which we add some axioms.\footnote{Similar hybrid systems can be built on top of other versions of arithmetic in finite types. We wil be using that there is a notion of equality for all finite for which the congruence laws hold, so, as matter stand, this might not be possible for the system $\weha$, as in \cite{kohlenbach08}. If one still wants to have a version which allows for G\"odel's Dialectica interpretation, an good option is the system $\hazero$ from \cite{ego17}.}

The system $\ha$ is based on multi-sorted intuitionistic logic, where the sorts are the finite types, as generated by the following grammar:
\[ \mbox{type} = 0 \, | \, \mbox{type} \times \mbox{type} \, | \, \mbox{type} \to \mbox{type}. \]
We will use small Greek letters as variables to range over the finite types. The finite type 0 stands for the type of natural numbers and the finite types are closed under product and function types. We use the convention that $\times$ (and $\land$) binds stronger than $\to$ and that $\to$ associates to the right, so $\rho \to \sigma \to \tau$ stands for $\rho \to (\sigma \to \tau)$.

\begin{table} \caption{The combinators} \label{combinators}
    \begin{displaymath}
        \begin{array}{ll}
            \kcom:  &\rho \to \sigma \to \rho \\
            \scom: & (\rho \to \sigma \to \tau) \to (\rho \to \sigma) \to (\rho \to \tau) \\
            \pair: & \sigma \to \tau \to \sigma \times \tau \\
            \fst: &\sigma \times \tau \to \sigma \\
            \snd: & \sigma \times \tau \to \tau \\
            0: & 0 \\
            S: & 0 \to 0 \\
            \Rcom: & \sigma \to (0 \to \sigma \to \sigma) \to (0 \to \sigma)
        \end{array}
    \end{displaymath}
    \end{table}

For all types $\rho, \sigma$ and $\tau$, the language contains the constants that can be found in Table \ref{combinators}, together with their types. If we were being absolutely precise, we should say that for every pair of types $\rho$ and $\sigma$, we have a combinator $\kcom_{\rho,\sigma}$ of type $\rho \to \sigma \to \rho$, and similarly for the other combinators; however, we will never write these indices and the reader is asked to infer these from the context. We will use ${\bf c}$ as a metavariable ranging over all combinators. Terms are built from these combinators and variables using application: that is, if $s$ is a term of type $\sigma \to \tau$ and $t$ as term of type $\sigma$, then the result of applying $s$ to $t$, written as $st$, is a term of type $\tau$. Application associates to the left, and therefore $rst$ stands for $(rs)t$.

The system $\ha$ has a primitive notion of equality at each type $\sigma$, which we will write $\equiv_\sigma$ or $\equiv$ when $\sigma$ is understood. It includes axioms stating that it is an equivalence relation at each type, as well as a congruence:
\begin{displaymath}
    \begin{array}{c}
       x \equiv_\sigma x, \quad x \equiv_\sigma y \to y \equiv_\sigma x, \quad 
       x \equiv_\sigma y \to y \equiv_\sigma z \to x \equiv_\sigma z, \\ x \equiv_{\sigma \to \tau} x' \to y \equiv_\sigma y' \to xy \equiv_{\tau} x'y'
    \end{array}
\end{displaymath}
We will assume the standard axioms for the combinators:
\begin{eqnarray*}
        \kcom xy &\equiv& x \\
        \scom xyz & \equiv & xz(yz) \\
        \fst(\pair \, xy) & \equiv & x \\
        \snd(\pair \, xy) & \equiv & y \\
        \Rcom x y 0 & \equiv & x \\
        \Rcom x y (Sm) & \equiv & ym(\Rcom x y m)
\end{eqnarray*}
We will not use that pairing is surjective (that is, that $\pair(\fst \, x)(\snd \, x) \equiv x$ holds). 

Finally, $\ha$ contains the Peano axioms
\begin{displaymath}
    \begin{array}{c}
        Sx \equiv_0 Sy \to x \equiv_0 y, \quad Sx \not\equiv_0 0,
    \end{array}
\end{displaymath}
as well as the induction axiom for all formulas in the language of $\ha$:
\[ \varphi(0) \to \forall x:0.\big( \, \varphi(x) \to \varphi(Sx) \, \big) \to \forall x:0.\varphi(x). \]
Let us also recall that $\eha$ is the extension of $\ha$ with the following extensionality principles:
\[ x \equiv _{\sigma \times \tau} y \leftrightarrow \fst \, x \equiv_\sigma  \fst \, y \land \snd \, x \equiv_\tau \snd \, y, \quad f \equiv_{\sigma \to \tau} g \leftrightarrow \forall x:\sigma.fx \equiv_\tau gx. \]

The following well-known properties of $\ha$ will be used repeatedly in what follows.

\begin{lemm}{closedtermofanytype}
    For any finite type $\sigma$ there is a closed term $0_\sigma:\sigma$.
\end{lemm}
\begin{proof}
    We define $0_\sigma$ by induction on type structure of $\sigma$:
    \begin{eqnarray*}
        0_0 & :\equiv & 0, \\
        0_{\sigma \times \tau} & :\equiv & \pair \, 0_\sigma \, 0_\tau, \\
        0_{\sigma \to \tau} & :\equiv & \kcom \, 0_\tau.
    \end{eqnarray*}
\end{proof}

\begin{prop}{combinatorycompleteness}
    Let $t$ be a term of type $\tau$ and $x$ be a variable of type $\sigma$. Then there is a term $\lambda x:\sigma.t$ of type $\sigma \to \tau$, whose free variables are those of $t$ minus $x$, such that for any term $s$ of type $\sigma$ we have $\ha \vdash (\lambda x:\sigma.t)s \equiv t[s/x]$.
\end{prop}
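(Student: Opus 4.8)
The plan is to prove combinatory completeness (the existence of $\lambda$-abstraction) by the standard bracket-abstraction argument, proceeding by induction on the structure of the term $t$. The key point is that the primitive combinators $\kcom$ and $\scom$ are exactly what is needed to simulate abstraction and substitution, so I would define $\lambda x:\sigma.t$ recursively on $t$ and then verify the required equation by a parallel induction.

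Let me sketch the recursion. I would define $\lambda x:\sigma.t$ by cases on the shape of $t$:
\begin{itemize}
\item If $t$ is the variable $x$ itself, set $\lambda x:\sigma.x :\equiv \scom\,\kcom\,\kcom$ (of type $\sigma\to\sigma$), since $\scom\,\kcom\,\kcom\,s \equiv \kcom\,s(\kcom\,s) \equiv s$ by the combinator axioms.
\item If $t$ is a variable $y\neq x$, or a combinator ${\bf c}$ (anything not containing $x$ as a free variable), set $\lambda x:\sigma.t :\equiv \kcom\,t$, so that $(\kcom\,t)s \equiv t \equiv t[s/x]$ since $x$ does not occur free in $t$.
\item If $t$ is an application $t_1 t_2$, set $\lambda x:\sigma.t :\equiv \scom\,(\lambda x:\sigma.t_1)(\lambda x:\sigma.t_2)$, using the induction hypothesis that the abstractions of the immediate subterms already exist.
\end{itemize}

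Since the types are arranged so that these combinator applications are well-typed, one checks along the way that $\lambda x:\sigma.t$ indeed has type $\sigma\to\tau$ and that its free variables are precisely those of $t$ with $x$ removed. I would then establish the defining equation $\ha\vdash(\lambda x:\sigma.t)s\equiv t[s/x]$ by induction on $t$ following the same three cases: the variable and constant cases are the direct computations noted above, and the application case combines the $\scom$-axiom $\scom\,(\lambda x.t_1)(\lambda x.t_2)s\equiv(\lambda x.t_1)s\big((\lambda x.t_2)s\big)$ with the two induction hypotheses and the congruence rule for $\equiv$, yielding $t_1[s/x]\,t_2[s/x]\equiv(t_1t_2)[s/x]$.

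The construction itself is entirely routine; the only genuine care required is bookkeeping. The main point to watch is the congruence step in the application case: to conclude from $(\lambda x.t_1)s\equiv t_1[s/x]$ and $(\lambda x.t_2)s\equiv t_2[s/x]$ that the full applied terms are equal, one must invoke the congruence axiom $u\equiv_{\sigma\to\tau}u'\to v\equiv_\sigma v'\to uv\equiv_\tau u'v'$, which is exactly the congruence law listed among the axioms of $\ha$. This is where it matters that $\equiv$ is a genuine congruence rather than a weaker equality, and it is the reason the footnote flags $\weha$ as potentially problematic. No genuine obstacle arises otherwise, and in particular no extensionality is needed.
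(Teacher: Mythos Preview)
Your proposal is correct and follows exactly the same bracket-abstraction construction as the paper: the paper defines $\lambda x:\sigma.t$ by the same four clauses (with your second bullet split into two), citing \cite[Lemma 3.15]{kohlenbach08} and \cite[Proposition 9.1.8]{TroelstraVanDalen88ii} rather than spelling out the verification. Your added remarks on the congruence step and on why $\weha$ is flagged are accurate and go slightly beyond what the paper writes out.
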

\begin{proof}  (See \cite[Lemma 3.15]{kohlenbach08} or \cite[Proposition 9.1.8]{TroelstraVanDalen88ii}) We define $\lambda x:\sigma.t$ by induction on the structure of the term $t$, as follows:
    \begin{eqnarray*}
        \lambda x:\sigma.x & :\equiv & \scom \, \kcom \, \kcom, \\
        \lambda x:\sigma.y & :\equiv & \kcom \, y \quad \mbox{if $y$ is a variable distinct from $x$}, \\
        \lambda x:\sigma.{\bf c} & :\equiv & \kcom \, {\bf c} \quad \mbox{if ${\bf c}$ is one of the combinators from Table \ref{combinators}}, \\
        \lambda x:\sigma.st &:\equiv & \scom \, (\lambda x:\sigma.s) \, (\lambda x:\sigma.t).
    \end{eqnarray*} 
\end{proof}

On top of this neutral theory we build a axiomatic theory of extensionality and extensional equality, which we will denote by $\hha$. To formulate this hybrid version of arithmetic, we extend the language of $\ha$ with a new unary predicate $\ext_\sigma$ and a new binary predicate $=_\sigma$ for each finite type $\sigma$. This means that if $s$ and $t$ are terms of type $\sigma$, then we have new atomic formulas $\ext_\sigma(t)$ for ``$t$ is extensional'' and $s =_\sigma t$ for ``$s$ and $t$ are extensionally equal'', where we will often drop the type symbol $\sigma$. We will use the following abbreviations:
\begin{eqnarray*}
    \forallext x:\sigma.\varphi & :\equiv & \forall x:\sigma.( \, \ext_\sigma(x) \to \varphi \, ) \\
    \existsext x:\sigma.\varphi & :\equiv & \exists x:\sigma.( \, \ext_\sigma(x) \land \varphi \, )
\end{eqnarray*}

\begin{defi}{hha} The system $\hha$ is theory which is formulated in this extended language and which contains, besides the axiom of $\ha$ previously mentioned, the following axioms for all types $\sigma, \tau$:
\begin{align*}
    x =_0 y \leftrightarrow x \equiv_0 y, \quad 
    \forall x:0. \, \ext_0(x), \\ x =_{\sigma \times \tau} y \leftrightarrow \fst \, x =_\sigma \fst \, y \land \snd \, x =_\tau \snd \, y, \quad \ext_{\sigma \times \tau}(x) \leftrightarrow \ext_\sigma(\fst \, x) \land \ext_\tau(\snd \, x) \\
    f =_{\sigma \to \tau} g \leftrightarrow \forallext x:\sigma.fx=_\tau gx, \quad \ext_{\sigma \to \tau}(f) \to \ext_\sigma(x) \to \ext_\tau(fx) \\
    x \equiv_\sigma y \to \ext_\sigma(x) \to \ext_\sigma(y), \quad \ext({\bf c}),
\end{align*}
where $\bf c$ is any of the combinators from Table \ref{combinators}. In addition, $\hha$ has the induction axiom for all formulas in the extended language.

The system $\heha$ is the extension of $\hha$ that we obtain by adding the following \emph{extensionality axiom}:
\begin{align*}
\EXT: \quad \ext_{\sigma \to \tau}(f) \to \ext_\sigma(x) \to \ext_\sigma(y) \to x =_\sigma y \to fx =_\tau fy.
\end{align*}
\end{defi}

\begin{rema}{nonimplications} Despite being very natural, we have decided not to include the axiom $x =_\sigma y \to \ext_\sigma(x) \to \ext_\sigma(y)$. While almost all results in this paper would still have gone through if we had included this axiom, there is one crucial exception, which we will discuss in Subsection 6.2 below.
\end{rema}  

\begin{rema}{redundancyinpresenceofsurjofpairing} If we assume surjectivity of pairing (that is, $\pair(\fst \, x)(\snd \, x) \equiv x$), then the axiom $\ext_{\sigma \times \tau}(x) \leftrightarrow \ext_\sigma(\fst \, x) \land \ext_\tau(\snd \, x)$ is redundant. Indeed, the left-to-right direction follows from the fact that $\fst$ and $\snd$ are extensional in combination with the axiom saying that extensional functions send extensional input to extensional output. The right-to-left direction follows from the same axiom in combination with extensionality of $\pair$ and surjectivity of pairing.
\end{rema}

\begin{rema}{onlyeqoftype0primitive} 
    We will often consider extensional equality $=_\sigma$ as an abbreviation. Indeed, by repeatedly replacing the left-hand side of the equivalences 
    \begin{eqnarray*}
            x =_0 y & \leftrightarrow & x  \equiv_0 y, \\ x =_{\sigma \times \tau} y & \leftrightarrow & \fst \, x =_\sigma \fst \, y \land \snd \, x =_\tau \snd \, y, \\ f =_{\sigma \to \tau} g & \leftrightarrow & \forallext x:\sigma.fx=_\tau gx, 
    \end{eqnarray*}
    with its right-hand side, we can rewrite any formula containing the symbol for extensional equality into an equivalent one that no longer uses that symbol.
\end{rema} 

\begin{lemm}{propertiesofextequalityinhha}
    \begin{enumerate}
        \item[(i)] $\hha \vdash x =_\sigma x$ and, more generally, $\hha \vdash x \equiv y \to x = y$.
        \item[(ii)] $\hha \vdash x =_\sigma y \to y =_\sigma x$
        \item[(iii)] $\hha \vdash x =_\sigma y \to y =_\sigma z \to x =_\sigma z$
    \end{enumerate}   
\end{lemm}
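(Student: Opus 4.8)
The plan is to prove all three statements simultaneously by induction on the type structure of $\sigma$, exploiting the recursive unfolding of $=_\sigma$ into the ground relation $\equiv_0$ described just above via the three defining equivalences. At each type the relevant property of $=_\sigma$ reduces to two ingredients: the same property one type-level down, supplied by the induction hypothesis, and the matching property of $\equiv$, which is available at the ground type as an axiom of \ha.

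For (i) I would first establish the more general implication $x \equiv_\sigma y \to x =_\sigma y$, from which reflexivity $x =_\sigma x$ follows immediately by taking $y :\equiv x$ and using $x \equiv_\sigma x$. The base case $\sigma = 0$ is precisely the right-to-left direction of the axiom $x =_0 y \leftrightarrow x \equiv_0 y$. For a product type $\sigma \times \tau$, assuming $x \equiv_{\sigma \times \tau} y$ I apply the congruence law for $\equiv$ (with $\fst \equiv \fst$ and $\snd \equiv \snd$ by reflexivity) to obtain $\fst\, x \equiv_\sigma \fst\, y$ and $\snd\, x \equiv_\tau \snd\, y$; the induction hypothesis then yields $\fst\, x =_\sigma \fst\, y$ and $\snd\, x =_\tau \snd\, y$, which is exactly the defining clause for $x =_{\sigma \times \tau} y$. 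For a function type $\sigma \to \tau$, assuming $f \equiv_{\sigma \to \tau} g$ and taking an arbitrary extensional $x:\sigma$, the congruence law (with $x \equiv x$) gives $fx \equiv_\tau gx$, so the induction hypothesis delivers $fx =_\tau gx$; since $x$ was an arbitrary extensional argument, this is $\forallext x:\sigma.fx =_\tau gx$, i.e. $f =_{\sigma \to \tau} g$.

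Statements (ii) and (iii) follow the same scheme but no longer require the congruence law: the base cases are simply the symmetry and transitivity axioms for $\equiv_0$, and in the inductive steps one transports the hypotheses component-wise (for products) or argument-wise (for functions) through the defining clauses and invokes the induction hypothesis at the smaller types. For instance, in the function case of (iii), from $f =_{\sigma \to \tau} g$ and $g =_{\sigma \to \tau} h$ one reads off $fx =_\tau gx$ and $gx =_\tau hx$ for every extensional $x$, concludes $fx =_\tau hx$ by the induction hypothesis, and repackages this as $f =_{\sigma \to \tau} h$.

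I expect no serious obstacle; the only point deserving attention is the function case, where $=_{\sigma \to \tau}$ quantifies over extensional arguments only ($\forallext$ rather than $\forall$). This restriction is harmless: the induction hypotheses hold for \emph{all} arguments of the smaller type and so are in particular available for the extensional ones, while whenever I need to establish $f =_{\sigma \to \tau} g$ it suffices to treat an arbitrary extensional $x$. It is worth noting that these arguments use none of the $\ext$-axioms beyond what is built into the abbreviation $\forallext$, and in particular do not require $\EXT$.
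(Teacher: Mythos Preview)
Your proposal is correct and follows exactly the approach the paper indicates: induction on the type structure of $\sigma$ using the defining equivalences for $=_\sigma$ from the preceding remark. The paper's own proof is a one-line reference to that induction, and you have simply (and accurately) spelled out the details, including the use of the congruence law for $\equiv$ in part (i) and the observation that the $\forallext$ quantifier in the function clause causes no difficulty.
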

\begin{proof}
    All these statements are proven by induction on the type structure, using the axioms mentioned in \refrema{onlyeqoftype0primitive} above.
\end{proof}

In $\hha$ the $\eta$-axiom and surjectivity of pairing hold with respect to extensional equality, in the following way.

\begin{lemm}{etaandsomesurjpairing}
    $\hha \vdash \forall f:\alpha \to \beta. f = \lambda x:\alpha.fx$ and $\hha \vdash \pair(\fst \,  x)(\snd \, x) = x$.
\end{lemm}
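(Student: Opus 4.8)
The plan is to prove both statements by unfolding the definition of extensional equality according to Remark~\ref{onlyeqoftype0primitive} and then reducing everything to the base case $=_0$, which coincides with $\equiv_0$. The key observation is that both claims are equalities of a compound type ($\alpha \to \beta$ and $\sigma \times \tau$ respectively), so the defining equivalences let me rewrite each goal into a statement about the relevant component types, where I can exploit the combinator axioms together with the congruence law for $\equiv$ and part~(i) of Lemma~\ref{propertiesofextequalityinhha}, which tells me that $x \equiv y$ implies $x = y$.

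For the $\eta$-axiom, I would argue as follows. By the function-type clause $f =_{\alpha \to \beta} g \leftrightarrow \forallext x:\alpha.\, fx =_\beta gx$, proving $f = \lambda x:\alpha.fx$ amounts to showing that for every $x:\alpha$ with $\ext_\alpha(x)$ we have $fx =_\beta (\lambda x:\alpha.fx)x$. But Proposition~\ref{combinatorycompleteness} guarantees $\ha \vdash (\lambda x:\alpha.fx)x \equiv fx$, and the combinator axioms are part of $\ha$. Hence $fx \equiv (\lambda x:\alpha.fx)x$, and by Lemma~\ref{propertiesofextequalityinhha}(i) this $\equiv$-equality upgrades to the extensional equality $fx =_\beta (\lambda x:\alpha.fx)x$. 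Crucially, I never need the hypothesis $\ext_\alpha(x)$ here: the $\beta$-reduction holds for all $x$, so in particular for extensional ones, which is exactly what the bounded universal quantifier $\forallext$ demands.

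For surjectivity of pairing, the strategy is parallel but uses the product clause $x =_{\sigma \times \tau} y \leftrightarrow \fst\, x =_\sigma \fst\, y \land \snd\, x =_\tau \snd\, y$. So I reduce $\pair(\fst\, x)(\snd\, x) = x$ to the two conjuncts $\fst(\pair(\fst\, x)(\snd\, x)) =_\sigma \fst\, x$ and $\snd(\pair(\fst\, x)(\snd\, x)) =_\tau \snd\, x$. The combinator axioms $\fst(\pair\, uv) \equiv u$ and $\snd(\pair\, uv) \equiv v$ give both conjuncts as $\equiv$-equalities, which I again lift to $=$ via Lemma~\ref{propertiesofextequalityinhha}(i). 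This is where the explicit remark in the excerpt that \emph{pairing is not assumed surjective for $\equiv$} becomes relevant: the point of the lemma is precisely that surjectivity, which fails for the primitive equality $\equiv$, nonetheless holds for the coarser extensional equality $=$, because $=$ on a product is defined purely through the projections.

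The main obstacle to watch for is a subtle one rather than a computational one: it lies in correctly tracking the extensionality side-conditions built into $\forallext$. In the $\eta$-case one might worry that the definitional unfolding of $f =_{\alpha \to \beta} \lambda x.fx$ requires comparing $f$ and $\lambda x.fx$ only on extensional arguments, and one must confirm that the $\beta$-equation supplied by Proposition~\ref{combinatorycompleteness} is unconditional and hence a fortiori available on those arguments. Once this is noted, both proofs are genuinely immediate, reducing to the base-type identification $=_0\ =\ \equiv_0$ together with the elementary lifting in Lemma~\ref{propertiesofextequalityinhha}(i); no use of $\EXT$ or of the extensionality-of-combinators axioms is needed.
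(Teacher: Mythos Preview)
Your proposal is correct and is exactly the intended argument: the paper states this lemma without proof, but the natural route is precisely the one you take, namely unfolding $=_{\alpha\to\beta}$ and $=_{\sigma\times\tau}$ via the axioms in \refrema{onlyeqoftype0primitive}, invoking the $\beta$-equation from \refprop{combinatorycompleteness} (respectively the projection axioms for $\pair$), and then lifting the resulting $\equiv$-equalities to $=$ via \reflemm{propertiesofextequalityinhha}(i). Your remarks about the extensionality side-conditions being harmless and about $\EXT$ not being needed are also on point.
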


\begin{prop}{closedtermsextensional}
    If $t$ is a term of $\hha$ of type $\sigma$ and the free variables of the term $t$ are $x_1,\ldots,x_n$ of types $\sigma_1,\ldots,\sigma_n$, respectively, then  
    \[ \hha \vdash \ext_{\sigma_1}(x_1) \to \ldots \to  \ext_{\sigma_n}(x_n) \to \ext_\sigma(t). \]
    In particular, we have $\hha \vdash \ext_\sigma(t)$ if $t$ is closed. 
\end{prop}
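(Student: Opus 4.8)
The plan is to argue by induction on the structure of the term $t$, showing in each case that $\hha$ derives the displayed implication whose hypotheses are the extensionality of the free variables of $t$. The only axioms of $\hha$ that I expect to need are the clause $\ext_{\sigma \to \tau}(f) \to \ext_\sigma(x) \to \ext_\tau(fx)$, expressing that extensional functions send extensional inputs to extensional outputs, together with the clauses $\ext({\bf c})$ asserting that each combinator from Table \ref{combinators} is extensional.

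I would first dispose of the base cases. If $t$ is a variable, then $t$ is $x_i$ for a single variable $x_i$ of type $\sigma_i = \sigma$, and the required implication is just $\ext_{\sigma_i}(x_i) \to \ext_{\sigma_i}(x_i)$, which holds by pure logic. If $t$ is a combinator ${\bf c}$, then $t$ has no free variables, so the implication reduces to $\ext_\sigma({\bf c})$, which is precisely the axiom $\ext({\bf c})$.

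For the inductive step I would suppose $t = su$, where $s$ has type $\rho \to \sigma$ and $u$ has type $\rho$. The free variables of $t$ are the union of those of $s$ and those of $u$. Applying the induction hypothesis to $s$, and weakening to the full list of hypotheses for $t$ (simply discarding those that do not occur in $s$), I obtain, under the assumption $\ext_{\sigma_1}(x_1) \to \ldots \to \ext_{\sigma_n}(x_n)$, a derivation of $\ext_{\rho \to \sigma}(s)$; the induction hypothesis for $u$ likewise yields $\ext_\rho(u)$. The axiom $\ext_{\rho \to \sigma}(s) \to \ext_\rho(u) \to \ext_\sigma(su)$ then gives $\ext_\sigma(t)$, as required.

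The only point demanding any care is the bookkeeping around free variables in the application case: the induction hypotheses for $s$ and for $u$ refer to possibly proper subsets of the free variables of $t$, so one must weaken each to the common hypothesis list before combining them. This is routine, and note that it is exactly the \emph{one-directional} form of the application clause that is used (we never need a converse). Once this is settled, the final assertion $\hha \vdash \ext_\sigma(t)$ for closed $t$ is the special case $n = 0$.
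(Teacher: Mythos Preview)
Your proof is correct and follows exactly the approach the paper intends: the paper's own proof is simply ``By induction on the structure of the term $t$,'' and your write-up supplies precisely the details of that induction using the axioms $\ext({\bf c})$ and $\ext_{\sigma \to \tau}(f) \to \ext_\sigma(x) \to \ext_\tau(fx)$.
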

\begin{proof}
    By induction on the structure of the term $t$.
\end{proof}

\begin{rema}{Elogic}
    Our treatment of the $\ext$-predicate might remind the reader of the $E$-predicate from $E$-logic (see, for instance, \cite[Section 2.2]{TroelstraVanDalen88i}), where it is read as ``existence'' or as ``being defined''. However, there are some striking differences. First of all, we allow quantifiers and free variables to range over non-extensional objects. In addition, we do not insist on ``strictness'': in particular, we do not require $x = y \to \ext(x) \land \ext(y)$. Indeed, every object, even the non-extensional ones, will be extensionally equal to itself (see \reflemm{propertiesofextequalityinhha}(i)). In this respect the current paper differs from \cite{bergpassmann22}, where we did insist on strictness, because that was what matched the categorical semantics in that paper.
\end{rema}    

\section{A closer look at extensionality}

In this section we will take a closer look at the extensionality axiom $\EXT$ in $\hha$, which says that for all finite types $\sigma$ and $\tau$ we have:
\[ \EXT_{\sigma,\tau}: \quad
\ext_{\sigma \to \tau}(f) \to \ext_\sigma(x) \to \ext_\sigma(y) \to x = y \to fx = fy. \]
Our goal in this section is to show that if this axiom holds in $\hha$ for types $\sigma$ and $\tau$ of the form $\sigma' \to 0$ and $\tau' \to 0$, respectively, then it holds for all types $\sigma$ and $\tau$. 

The reason this is useful is that it allows us to reformulate the axiom $\EXT$ in a way which no longer uses the notion of extensional equality. Of course, we have seen in \refrema{onlyeqoftype0primitive} a general method for eliminating extensional equality, but if the type $\sigma$ is very complicated (for instance, if it contains many nested function types), the formula which this produces for $x =_\sigma y$ may also be very complicated. However, the formula $f =_{\tau \to 0} g$ is equivalent to $\forallext x:\tau. fx \equiv_0 gx$, which is rather simple. So it follows from the results in this section that in $\hha$ the axiom $\EXT$ is equivalent to the following principle:
\begin{align*} \EXT': \quad \ext_{(\sigma \to 0) \to (\tau \to 0)}(f) \to \ext_{\sigma \to 0}(x) \to \ext_{\sigma \to 0}(y) \to \\ \forallext u:\sigma. xu \equiv_0 yu \to \forallext v:\tau. fxv \equiv_0 fyv. \end{align*}
The point is that $\EXT'$ is a simple reformulation of $\EXT$ which no longer uses the notion of extensional equality.

For the transparent presentation of our result, we will work in $\hha$ and use a tiny bit of category theory. We will write \ct{C} for the following category:
\begin{description}
    \item[Objects] The objects of \ct{C} are the finite types.
    \item[Morphisms] The morphisms $\alpha \to \beta$ in \ct{C} are equivalence classes of closed terms $t$ of type $\alpha \to \beta$; here we regard $t, t': \alpha \to \beta$ as equivalent if $\hha \vdash t = t'$ (which is equivalent to $\hha \vdash \forallext x:\alpha. tx =_\beta tx'$). 
\end{description}
Note that we always have $\hha \vdash \ext(t)$ for any morphism $t$ in this category, by \refprop{closedtermsextensional}. The identities $\alpha \to \alpha$ are given by $\lambda x:\alpha.x$, while the composition of $s: \alpha \to \beta$ and $t: \beta \to \gamma$ is $\lambda x:\alpha.t(sx)$. The axioms of a category are easily verified. 

A morphism $t: \alpha \to \beta$ in this category will be called \emph{strong} if 
\[ \hha \vdash \ext_\alpha(x) \to \ext_\alpha(y) \to x=_\alpha y \to tx =_\beta ty \]
(note that this independent of the choice of representative, and $\EXT$ implies that every morphism is strong). Clearly, identities are strong and the strong maps are closed under composition, so we obtain a subcategory \ct{S} of \ct{C} having the same objects as \ct{C} and whose morphisms are the strong morphisms of \ct{C}. 

Let us first investigate to which extent these categories are cartesian closed.

\begin{lemm}{categoricalproductcategory}
    The type $\alpha \times \beta$ is the categorical product of the types $\alpha$ and $\beta$ in both \ct{C} and \ct{S}.
\end{lemm}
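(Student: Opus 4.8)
The plan is to exhibit explicit projection and pairing morphisms and verify the universal property of the product by direct computation, handling $\ct{C}$ first and then checking separately that all the data stays inside the subcategory $\ct{S}$. Throughout I will use the characterization of morphism equality recorded in the definition of $\ct{C}$: two closed terms $s,t:\alpha\to\beta$ represent the same morphism precisely when $\hha\vdash\forallext z:\alpha. sz=_\beta tz$.

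First I would take the projections to be the combinators $\fst$ and $\snd$, which are closed terms of types $\alpha\times\beta\to\alpha$ and $\alpha\times\beta\to\beta$ and hence morphisms of $\ct{C}$. Given morphisms $f:\gamma\to\alpha$ and $g:\gamma\to\beta$, represented by closed terms, I define the candidate pairing by $\langle f,g\rangle:\equiv\lambda z:\gamma.\pair\,(fz)\,(gz)$, again a closed term, of type $\gamma\to\alpha\times\beta$. For the triangle identities I fix an extensional $z$ and compute, using the $\beta$-rule of \refprop{combinatorycompleteness} together with the combinator axiom $\fst(\pair\, x\, y)\equiv x$, that $(\fst\circ\langle f,g\rangle)z\equiv\fst(\pair(fz)(gz))\equiv fz$; since $\equiv$ implies $=$ by \reflemm{propertiesofextequalityinhha}(i), this yields $\fst\circ\langle f,g\rangle=f$, and symmetrically $\snd\circ\langle f,g\rangle=g$.

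For uniqueness, suppose $h:\gamma\to\alpha\times\beta$ also satisfies $\fst\circ h=f$ and $\snd\circ h=g$ in $\ct{C}$. By the function-extensionality characterization above it suffices to show $hz=_{\alpha\times\beta}\langle f,g\rangle z$ for every extensional $z$, and by the product axiom $x=_{\alpha\times\beta}y\leftrightarrow\fst\,x=_\alpha\fst\,y\land\snd\,x=_\beta\snd\,y$ this reduces to comparing the two components. For the first component I have $\fst(hz)=_\alpha fz=_\alpha\fst(\langle f,g\rangle z)$, where the left equality is the value at $z$ of $\fst\circ h=f$ (read off via \refprop{combinatorycompleteness}) and the right equality is the computation of the previous paragraph; transitivity and symmetry from \reflemm{propertiesofextequalityinhha} then give $\fst(hz)=_\alpha\fst(\langle f,g\rangle z)$, and the second component is handled the same way with $\snd$. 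Hence $h=\langle f,g\rangle$, so $\alpha\times\beta$ is the categorical product in $\ct{C}$.

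Finally I would check that the same data works in $\ct{S}$. The projection $\fst$ is strong, since from $x=_{\alpha\times\beta}y$ the product axiom immediately yields $\fst\,x=_\alpha\fst\,y$ (and likewise for $\snd$), so no extensionality hypotheses are even needed. If moreover $f$ and $g$ are strong, then so is $\langle f,g\rangle$: given extensional $x,y$ with $x=_\gamma y$, the product axiom reduces $\langle f,g\rangle x=_{\alpha\times\beta}\langle f,g\rangle y$ to $\fst(\langle f,g\rangle x)=_\alpha\fst(\langle f,g\rangle y)$ and the analogous statement for $\snd$, and since $\fst(\langle f,g\rangle x)\equiv fx$ and $\fst(\langle f,g\rangle y)\equiv fy$ these follow from strength of $f$ (respectively $g$). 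Because uniqueness in $\ct{C}$ a fortiori gives uniqueness among the fewer morphisms of $\ct{S}$, the universal property holds in $\ct{S}$ as well. I expect the only delicate point to be the bookkeeping that keeps definitional equality $\equiv$ (available from the combinator axioms) separate from extensional equality $=$ (the relation modulo which morphisms are identified), so that each step is justified by the correct axiom; the categorical content itself is routine.
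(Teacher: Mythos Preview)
Your proof is correct and follows essentially the same approach as the paper: projections $\fst,\snd$, pairing $\lambda z.\pair(fz)(gz)$, triangle identities by computation, and both strength and uniqueness via the product axiom $x=_{\sigma\times\tau}y\leftrightarrow\fst\,x=_\sigma\fst\,y\land\snd\,x=_\tau\snd\,y$. Your write-up is simply more explicit about the bookkeeping between $\equiv$ and $=$.
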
    
\begin{proof}
    The projection maps are $\fst: \alpha \times \beta \to \alpha$ and $\snd: \alpha \times \beta \to \beta$, respectively. The axiom $x =_{\sigma \times \tau} y \leftrightarrow \fst \, x =_\sigma \fst \, y \land \snd \, x =_\tau \snd \, y$ implies that both maps are strong.

    If $s: \gamma \to \alpha$ and $t: \gamma \to \beta$ are morphisms in \ct{C}, then so is \[ (s,t) :\equiv \lambda x:\sigma.\pair(sx)(tx): \gamma \to \alpha \times \beta. \]
    We have $\fst \circ (s,t) = s$ and $\snd \circ (s,t) = t$ as morphisms in \ct{C}. 
    
    It remains to show that $(s,t): \gamma \to \alpha \times \beta$ is the unique morphism with these properties and that $(s,t)$ will be strong as soon as $s$ and $t$ are. However, all of this follows from the axiom $x =_{\sigma \times \tau} y \leftrightarrow \fst \, x =_\sigma \fst \, y \land \snd \, x =_\tau \snd \, y$.
\end{proof}

\begin{lemm}{exponentialcategory}
    The type $\alpha \to \beta$ is the categorical exponential of the types $\alpha$ and $\beta$ in \ct{C} if we assume surjectivity of pairing; if we assume $\EXT_{\alpha,\beta}$, then it is the exponential in \ct{S}.
\end{lemm}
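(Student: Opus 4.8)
The plan is to exhibit an evaluation map together with a currying operation, verify the universal property of the exponential, and keep careful track of where the two extra hypotheses (surjectivity of pairing in \ct{C}, and $\EXT_{\alpha,\beta}$ in \ct{S}) are actually used. I would take the evaluation map to be the closed term
\[ \mathrm{ev} :\equiv \lambda p:(\alpha\to\beta)\times\alpha.\,(\fst \, p)(\snd \, p) \, : \, (\alpha\to\beta)\times\alpha \to \beta, \]
and, for a morphism $f:\gamma\times\alpha\to\beta$, its transpose to be
\[ \tilde f :\equiv \lambda y:\gamma.\lambda x:\alpha.\, f(\pair \, y \, x) \, : \, \gamma \to (\alpha\to\beta). \]
Both are closed terms, hence automatically morphisms of \ct{C}.

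Next I would verify commutativity, $\mathrm{ev}\circ(\tilde f\times\mathrm{id}_\alpha)=f$. Using the description of the product pairing from \reflemm{categoricalproductcategory} together with the combinator axioms, the left-hand side applied to $p$ reduces up to $\equiv$ to $f(\pair(\fst \, p)(\snd \, p))$, so it remains to identify this with $fp$, and here the two cases diverge. In \ct{C} I would invoke the assumed surjectivity of pairing $\pair(\fst \, p)(\snd \, p)\equiv p$ and the congruence law for $\equiv$ to conclude $f(\pair(\fst \, p)(\snd \, p))\equiv fp$, whence $=fp$ by \reflemm{propertiesofextequalityinhha}(i); note that the merely extensional surjectivity of \reflemm{etaandsomesurjpairing} would not suffice here, precisely because a morphism of \ct{C} need not be strong. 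In \ct{S}, by contrast, $f$ is strong, and since $\pair(\fst \, p)(\snd \, p)=p$ extensionally by \reflemm{etaandsomesurjpairing} with both sides extensional, strength of $f$ delivers the same conclusion.

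For uniqueness, suppose $h:\gamma\to(\alpha\to\beta)$ also satisfies $\mathrm{ev}\circ(h\times\mathrm{id}_\alpha)=f$. Instantiating this equation at the element $\pair \, y \, x$, which is extensional whenever $y,x$ are since $\pair$ is a combinator, and simplifying via $\fst(\pair \, y \, x)\equiv y$ and $\snd(\pair \, y \, x)\equiv x$, I obtain $(hy)x = f(\pair \, y \, x)=(\tilde f y)x$ for all extensional $y,x$; the defining equivalence for $=_{\alpha\to\beta}$ then yields $h=\tilde f$. This argument uses only the basic combinator axioms and is identical in both categories.

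It remains, for the \ct{S}-statement, to check that $\mathrm{ev}$ and $\tilde f$ are strong. Strength of $\tilde f$ is unproblematic: if $y=_\gamma y'$ with both extensional, the product axiom gives $\pair \, y \, x =_{\gamma\times\alpha}\pair \, y' \, x$ with both sides extensional, so strength of $f$ gives $f(\pair \, y \, x)=f(\pair \, y' \, x)$, that is, $\tilde f y=\tilde f y'$. The main obstacle, and the reason the hypothesis $\EXT_{\alpha,\beta}$ is imposed at all, is the strength of $\mathrm{ev}$. Writing $g=\fst \, p,\ a=\snd \, p,\ g'=\fst \, p',\ a'=\snd \, p'$ for extensional $p=_{(\alpha\to\beta)\times\alpha}p'$, the hypothesis $g=_{\alpha\to\beta}g'$ yields $ga=g'a$ upon instantiating at the extensional element $a$, whereas the passage from $g'a$ to $g'a'$ amounts to replacing the argument $a$ by the extensionally equal $a'$ inside the extensional function $g'$ — which is exactly what $\EXT_{\alpha,\beta}$ asserts. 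Transitivity then gives $\mathrm{ev}(p)=ga=g'a'=\mathrm{ev}(p')$, so $\mathrm{ev}$ is strong, and with all maps strong the same data witness the exponential in \ct{S}.
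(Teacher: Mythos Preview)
Your proof is correct and follows essentially the same approach as the paper: the same evaluation map and transpose, the same split into the surjectivity-of-pairing case for \ct{C} versus the strength-of-$f$ case for \ct{S} in verifying commutativity, and the same use of $\EXT_{\alpha,\beta}$ to establish that $\mathrm{ev}$ is strong. You are in fact a bit more explicit than the paper in unpacking the strength argument for $\mathrm{ev}$ (the paper defers this to a forward reference) and in noting why extensional surjectivity of pairing alone does not suffice in \ct{C}.
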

\begin{proof}
    We have a morphism ${\rm ev}: (\alpha \to \beta) \times \alpha \to \beta$ in \ct{C} given by
    \[ \lambda x:(\alpha \to \beta) \times \alpha.(\fst \, x)(\snd \, x). \]
    If $h: \gamma \times \alpha \to \beta$ is a morphism in \ct{C}, then we obtain a morphism $H: \gamma \to (\alpha \to \beta)$ in \ct{C} by:
    \[ \lambda x:\gamma.\lambda y:\alpha.h(\pair \, x \, y). \]
    If we assume surjectivity of pairing or if $h$ is strong, then we have ${\rm ev} \circ (H \times 1_\alpha) = h$. If $H'$ has the same property, then \[ Hxy \equiv {\rm ev} \circ (H \times 1)(\pair \, x \, y) = h(\pair \, x \, y) = {\rm ev} \circ (H' \times 1)(\pair \, x \, y) = H'xy \] for any extensional $x:\gamma$ and extensional $y:\alpha$; hence $H = H'$.
    
    It remains to show that ${\rm ev}$ is strong and that $H$ will be strong as soon as $h$ is. The fact that ${\rm ev}$ is strong follows from the axiom of extensionality $\EXT$ for the types $\alpha$ and $\beta$, as in \reflemm{propertiesofextequalityinheha}.

    Finally, suppose that $h$ is strong and $x, x': \gamma$ are extensional and satisfy $x = x'$. To show that $Hx = Hx'$, we let $y:\alpha$ be an arbitrary extensional element and derive that $Hxy = Hx'y$. Note that $\pair \, x \, y = \pair \, x \, y'$ with $\ext(\pair \, x \, y) = \ext(\pair \, x \, y')$. Because $h$ is strong, this implies that
    \[ Hxy = h(\pair \, x \, y) = h(\pair \, x' \, y) = Hx'y; \]
    we conclude that $H$ is strong as well.
\end{proof}

The following technical definition is our most important tool in this section.

\begin{defi}{strongretract}
Let $\alpha$ and $\beta$ be finite types. We will say that $\alpha$ is a \emph{strong retract} of $\beta$ if there are strong maps $i: \alpha \to \beta$ and $r: \beta \to \alpha$ such that $r \circ i = 1_\alpha$.
\end{defi}

Our next goal is to show that any type $\sigma$ is a strong retract of one of the form $\tau \to 0$. The proof relies on the following lemma.

\begin{lemm}{somepropertiesofstrongretracts}
\begin{enumerate}
    \item[(i)] If $\alpha$ is a strong retract of $\beta$ and $\beta$ is a strong retract of $\gamma$, then $\alpha$ is a strong retract of $\gamma$.
    \item[(ii)] If $\alpha_0$ is a strong retract of $\beta_0$ and $\alpha_1$ of $\beta_1$, then $\alpha_0 \times \alpha_1$ is a strong retract of $\beta_0 \times \beta_1$.
    \item[(iii)] If $\alpha$ is a strong retract of $\beta$, then $\gamma \to \alpha$ is a strong retract of $\gamma \to \beta$. 
    \item[(iv)] If $\alpha$ is strongly isomorphic to $\beta$, then $\gamma \to \alpha$ is strongly isomorphic to $\gamma \to \beta$. 
    \item[(v)] $(\alpha \to \beta) \times (\gamma \to \delta)$ is a strong retract of $\alpha \times \beta \to \gamma \times \delta$.
    \item[(vi)] $\alpha \to \beta \to \gamma$ is a strong retract of $\alpha \times \beta \to \gamma$.
\end{enumerate} 
\end{lemm}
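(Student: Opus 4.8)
The plan is to prove each of the six parts by constructing explicit strong maps and verifying the required equations, leaning heavily on the categorical structure already established. For part (i), I would simply compose: if $i_1: \alpha \to \beta$, $r_1: \beta \to \alpha$ and $i_2: \beta \to \gamma$, $r_2: \gamma \to \beta$ are the strong retractions, then $i_2 \circ i_1$ and $r_1 \circ r_2$ exhibit $\alpha$ as a strong retract of $\gamma$, since strong maps compose (as noted right after \refdefi{strongretract}) and $(r_1 \circ r_2) \circ (i_2 \circ i_1) = r_1 \circ (r_2 \circ i_2) \circ i_1 = r_1 \circ i_1 = 1_\alpha$. Part (ii) uses the product structure from \reflemm{categoricalproductcategory}: the maps $i_0 \times i_1$ and $r_0 \times r_1$ (formed using the pairing of projections composed with each retraction, which stays strong by the universal property) compose to $(r_0 \circ i_0) \times (r_1 \circ i_1) = 1_{\alpha_0} \times 1_{\alpha_1} = 1_{\alpha_0 \times \alpha_1}$.

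For parts (iii) and (iv) I would apply the functor $\gamma \to (-)$ to the given strong maps. Concretely, given strong $i: \alpha \to \beta$ and $r: \beta \to \alpha$, the induced maps send $f: \gamma \to \alpha$ to $\lambda x:\gamma.\,i(fx)$ and $g: \gamma \to \beta$ to $\lambda x:\gamma.\,r(gx)$; I would check these are strong by a direct computation with the extensional-equality axioms (post-composition by a strong map preserves the strong property), and that $r_* \circ i_* = 1$ follows pointwise from $r \circ i = 1_\alpha$ on extensional inputs. Part (iv) is then the special case where both $i \circ r = 1_\beta$ and $r \circ i = 1_\alpha$ hold, giving a strong isomorphism after applying $\gamma \to (-)$.

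The substantive parts are (v) and (vi), which encode the two genuinely new type-reshuffling facts. For (vi) I would use the strong retraction witnessing currying: the map $\alpha \to \beta \to \gamma$ into $\alpha \times \beta \to \gamma$ sends $f$ to $\lambda z:\alpha \times \beta.\,f(\fst\,z)(\snd\,z)$, and the reverse map sends $g$ to $\lambda x:\alpha.\lambda y:\beta.\,g(\pair\,x\,y)$. Composing and using the $\beta$-rule for $\fst,\snd$ together with \reflemm{etaandsomesurjpairing} returns the original $f$ up to extensional equality. For (v) I would exhibit $(\alpha \to \beta) \times (\gamma \to \delta)$ as a retract of $\alpha \times \gamma \to \beta \times \delta$ by sending a pair $(f,g)$ to $\lambda z.\,\pair(f(\fst\,z))(g(\snd\,z))$, and recovering $(f,g)$ by evaluating at suitable points and projecting. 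Here the retraction map must pre-compose with insertions $\alpha \to \alpha \times \gamma$ and $\gamma \to \alpha \times \gamma$, which require a chosen basepoint; I would use the closed terms $0_\gamma, 0_\alpha$ from \reflemm{closedtermofanytype}, noting these are extensional by \refprop{closedtermsextensional}.

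The main obstacle I anticipate is the strength verification in (v): the insertion maps built from fixed basepoints $0_\gamma, 0_\alpha$ must be shown strong, and the retraction must genuinely recover both components $f$ and $g$ simultaneously from a single function of type $\alpha \times \gamma \to \beta \times \delta$. The care needed is that recovering $f$ means applying the combined function to inputs of the form $\pair\,x\,0_\gamma$ and then taking $\fst$, which only yields $f(x)$ up to extensional equality and only for extensional $x$; the strongness of the original pair $(f,g)$, which is available since these are strong morphisms, is exactly what makes the round-trip land back on the original pair. Throughout, I would rely on the fact that all these constructions are built from closed terms, so extensionality is automatic by \refprop{closedtermsextensional}, and the equational checks reduce to the combinator axioms together with \reflemm{etaandsomesurjpairing}.
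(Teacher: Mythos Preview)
Your proposal is correct and follows essentially the same route as the paper: parts (i)--(ii) by composition and the product structure, (iii)--(iv) by post-composition with the given strong maps, and (v)--(vi) by the standard product-of-functions and currying constructions, with (v) using the basepoints $0_\sigma$ exactly as the paper does. One small remark: in (v) you have tacitly corrected a typo in the stated codomain (it should indeed be $\alpha \times \gamma \to \beta \times \delta$), and your anticipated need for ``strongness of the original pair $(f,g)$'' is not actually required---the round-trip verification $r \circ i = 1$ uses only extensionality of the input $h$, the combinator axioms for $\fst,\snd,\pair$, and the $\eta$-equality from \reflemm{etaandsomesurjpairing}, not that the components $f,g$ are strong morphisms in the technical sense.
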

\begin{proof}
    Points (i) and (ii) are simple exercises in category theory, with (ii) using \reflemm{categoricalproductcategory}.

    (iii): If $i: \alpha \to \beta$ and $r: \beta \to \alpha$ are strong maps such that $r \circ i = 1$, then
    \[ i' :\equiv \lambda x:\gamma \to \alpha.\lambda y:\gamma.i(xy) \]
    and 
    \[ r' :\equiv \lambda x:\gamma \to \beta.\lambda y:\gamma.r(xy) \]
    define strong maps $i': (\gamma \to \alpha) \to (\gamma \to \beta)$ and $r': (\gamma \to \beta) \to (\gamma \to \alpha)$ such that $r' \circ i' = 1$. Indeed, if we have extensional $x, x':\gamma$ such that $x = x'$, then $i'x = i'x'$ will follow as soon as we have $i'xy = i'x'y$ for any extensional $y:\gamma$. We have $xy = x'y$ because $x = x'$ and $y$ is extensional, and
    \[ i'xy \equiv i(xy) = i(x'y) \equiv i'x'y, \]
    because $i$ is strong. So $i'$ is strong and a similar argument shows that $r'$ is strong as well.

    To show $r' \circ i' = 1$, we pick extensional $x:\gamma \to \alpha$ and $y: \gamma$ and show that $(r' \circ i')xy = xy$. But we have
    \[ (r' \circ i')xy \equiv r((i'x)y) \equiv r(i(xy)) = xy, \]
    using that $xy$ is extensional and $r \circ i = 1$.

    (iv): By $\alpha$ and $\beta$ being strongly isomorphic, we mean that they are isomorphic in the category \ct{S}. If in the situation as in (iii) above, we also have $i \circ r = 1$, then we must have $i' \circ r' = 1$ as well, by symmetry.

    (v): In the forward directon there is:
    \[ i :\equiv \lambda h:(\alpha \to \beta) \times (\gamma \to \delta).\lambda y:\alpha \times \beta.\pair((\fst \, h)(\fst \, y))((\snd \, h)(\snd \, y)), \]
    while in the backward direction there is:
    \[ r :\equiv \lambda k: \alpha \times \beta \to \gamma \times \delta.\pair(\lambda x:\alpha.\fst(k(\pair \, x \, 0)))(\lambda z:\beta.\snd(k(\pair \, 0 \, z)), \]
    where $0$ is a closed term as in \reflemm{closedtermofanytype}. By arguments similar to those that we have seen before, both operations can be shown to be strong, while $r \circ i = 1$.

    (vi): In the forward directon there is:
    \[ i :\equiv \lambda f:\alpha \to \beta \to \gamma.\lambda x:\alpha \times \beta.f(\fst \, x)(\snd \, x), \]
    while in the backward direction there is:
    \[ r :\equiv \lambda g: \alpha \times \beta \to \gamma.\lambda x:\alpha.\lambda y:\beta.g(\pair \, x \, y). \] 
    Both operations can be shown to be strong, while $r \circ i = 1$. (Note that if we had surjectivity of pairing, we could show $i \circ r = 1$ as well.)
\end{proof}    

\begin{prop}{retractargument}
    Every type $\sigma$ is a strong retract of one of the form $\tau \to 0$.
\end{prop}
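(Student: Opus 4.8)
The plan is to argue by induction on the structure of $\sigma$, maintaining the slightly stronger invariant that $\sigma$ is a strong retract of a type whose codomain is \emph{exactly} $0$ (and not merely some type built up from $0$ using $\times$). All of the combinatorial content has already been packaged into \reflemm{somepropertiesofstrongretracts}, so in each case I only need to chain a few of its clauses together using transitivity (clause (i)). For the base case $\sigma = 0$, I would display $0$ as a strong retract of $0 \to 0$ by hand, taking $i :\equiv \kcom$ (so that $i\,n$ is the constant function $\kcom\,n$) and $r :\equiv \lambda f:0\to 0.\,f\,0$; then $r(i\,n) \equiv (\kcom\,n)0 \equiv n$, so $r \circ i = 1_0$, and both maps are automatically strong because on type $0$ extensional equality is just $\equiv_0$, so strongness reduces to the congruence axioms.

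In the function case $\sigma = \sigma_0 \to \sigma_1$, I would apply the induction hypothesis to the codomain $\sigma_1$, obtaining that $\sigma_1$ is a strong retract of some $\tau_1 \to 0$. Then \reflemm{somepropertiesofstrongretracts}(iii) (with its parameter $\gamma$ instantiated to $\sigma_0$) shows that $\sigma_0 \to \sigma_1$ is a strong retract of $\sigma_0 \to (\tau_1 \to 0)$, and \reflemm{somepropertiesofstrongretracts}(vi) shows that $\sigma_0 \to \tau_1 \to 0$ is a strong retract of $\sigma_0 \times \tau_1 \to 0$. Chaining these by transitivity exhibits $\sigma_0 \to \sigma_1$ as a strong retract of $\sigma_0 \times \tau_1 \to 0$, whose codomain is again $0$. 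Notably, the induction hypothesis on $\sigma_0$ is not even needed.

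In the product case $\sigma = \sigma_0 \times \sigma_1$, the induction hypothesis gives that each $\sigma_i$ is a strong retract of some $\tau_i \to 0$, so \reflemm{somepropertiesofstrongretracts}(ii) makes $\sigma_0 \times \sigma_1$ a strong retract of $(\tau_0 \to 0) \times (\tau_1 \to 0)$, and \reflemm{somepropertiesofstrongretracts}(v) makes the latter a strong retract of $\tau_0 \times \tau_1 \to 0 \times 0$. The extra step is to absorb the codomain $0 \times 0$ back into $0$: using a primitive recursive pairing $j:0\to 0\to 0$ together with its two projections (closed terms definable from $\Rcom$), I would exhibit $0 \times 0$ as a strong retract of $0$, and then \reflemm{somepropertiesofstrongretracts}(iii) turns $\tau_0 \times \tau_1 \to 0 \times 0$ into a strong retract of $\tau_0 \times \tau_1 \to 0$; transitivity completes the case. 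As before, all the maps here involve only the types $0$ and $\times$, on which extensional equality coincides with $\equiv$, so their strongness is automatic.

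The only genuinely non-mechanical point is this codomain bookkeeping: clause (v) unavoidably introduces a factor $0 \times 0$ in the codomain, so the argument does not literally stay inside types of the form $\tau \to 0$ unless one separately knows that $0 \times 0$ is a strong retract of $0$. I therefore expect the collapse of $0 \times 0$ to $0$, via the arithmetical pairing, to be the step needing the most care; the remainder is a routine assembly of the clauses of \reflemm{somepropertiesofstrongretracts} under transitivity.
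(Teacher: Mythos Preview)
Your proposal is correct and follows essentially the same route as the paper: induction on $\sigma$, the same base case with $i :\equiv \kcom$ and $r :\equiv \lambda f.f0$, the same use of clauses (ii), (v) for products and (iii), (vi) for function types, and the same appeal to primitive recursive pairing to collapse $0 \times 0$ into $0$. The only cosmetic difference is that the paper phrases the last step as a strong \emph{isomorphism} $0 \times 0 \cong 0$ and invokes clause (iv), whereas you phrase it as a strong retract and invoke clause (iii); either works.
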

\begin{proof}
    We prove this statement by induction on the structure of $\sigma$, using the properties of strong retracts established in the previous lemma. In addition, we use that we can code pairs of natural numbers as a single natural number: in particular, we have a strong isomorphism $0 \times 0 \to 0$.

    For type 0, we have maps $i: 0 \to (0 \to 0)$ and $r:(0 \to 0) \to 0$ given by $\lambda x:0.\lambda y:0.x$ and $\lambda f:0 \to 0.f0$, respectively. Clearly, both are strong morphisms and $r \circ i = 1$.

    If $\sigma_0$ is a strong retract of $\tau_0 \to 0$ and $\sigma_1$ of $\tau_1 \to 0$, then $\sigma_0 \times \sigma_1$ is a strong retract of $(\tau_0 \to 0) \times (\tau_1 \to 0)$, which in turn is a strong retract of $\tau_0 \times \tau_1 \to 0 \times 0$, which is strongly isomorphic to $\tau_0 \times \tau_1 \to 0$.

    Finally, if $\sigma$ is a strong retract of $\tau \to 0$, then $\rho \to \sigma$ is a strong retract of $\rho \to (\tau \to 0)$, which is a strong retract of $(\rho \times \tau) \to 0$.
 \end{proof}

\begin{prop}{simplificationofextensionality}
    In $\hha$, if we have $\EXT_{\tau_0 \to 0,\tau_1 \to 0}$ for all finite types $\tau_0,\tau_1$, then we have $\EXT_{\sigma_0,\sigma_1}$ for all finite types $\sigma_0,\sigma_1$.
\end{prop}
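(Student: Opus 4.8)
The plan is to transport $\EXT$ from the generating types $\tau\to 0$ to all types along the strong retractions of \refprop{retractargument}. The guiding reformulation is that, for fixed $\sigma_0,\sigma_1$, the axiom $\EXT_{\sigma_0,\sigma_1}$ says exactly that every extensional $f:\sigma_0\to\sigma_1$ is strong in the sense of \refdefi{strongretract} (read for the possibly-open term $f$). The workhorse will be a transfer principle: \emph{if $f = f'$ and $f'$ is strong, then $f$ is strong}; this follows at once from \reflemm{propertiesofextequalityinhha}, since for extensional $x=y$ one has $fx = f'x = f'y = fy$. Its point is that strongness of a manifestly strong substitute propagates to $f$ as soon as $f$ agrees with it extensionally.

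A productive first move is to reduce the codomain to $0$. Given extensional $f:\sigma_0\to\sigma_1$ and a strong retraction $i_1:\sigma_1\to(\tau_1\to 0)$, $r_1$ with $r_1\circ i_1 = 1$ from \refprop{retractargument}, the composite $r_1\circ(i_1\circ f)$ equals $f$ extensionally --- here the identity $r_1\circ i_1 = 1$ is applied to the \emph{extensional value} $fx$, so no extensionality of $f$ is needed --- whence by the transfer principle it suffices to make $i_1\circ f:\sigma_0\to(\tau_1\to 0)$ strong. Evaluating at an arbitrary extensional $v:\tau_1$ and using the abbreviations of \refrema{onlyeqoftype0primitive}, this in turn reduces to the principles $\EXT_{\sigma_0,0}$. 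Thus the whole proposition comes down to deriving $\EXT_{\sigma_0,0}$ for all $\sigma_0$ from the (codomain-reduced) hypothesis $\EXT_{\tau_0\to 0,0}$.

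For the domain I would again sandwich: fix a strong retraction $i_0:\sigma_0\to(\tau_0\to 0)$, $r_0$ with $r_0\circ i_0 = 1$, and set $g:\equiv f\circ r_0:(\tau_0\to 0)\to 0$. As $r_0$ is closed, $g$ is extensional (\refprop{closedtermsextensional}), so the hypothesis makes $g$ strong; composing with the strong map $i_0$ shows that $g\circ i_0 = f\circ(r_0\circ i_0)$ is strong. I expect the crux to be the final comparison of this with $f$: unlike the codomain case, the domain round-trip $r_0\circ i_0 = 1$ sits \emph{inside} the argument of $f$, so that $(g\circ i_0)(x) = f(r_0(i_0 x))$ while $r_0(i_0 x) = x$ holds only up to extensional equality; collapsing $f(r_0(i_0 x))$ to $fx$ is once more a demand that $f$ respect an extensional equality. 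This is exactly where the coding of products must do real work: using the strong isomorphism $0\times 0\cong 0$ (one direction of which holds by $\equiv$) together with the congruence of $\equiv$, I would treat a product type $\sigma_0 = \sigma'\times\sigma''$ by interpolating in the two coordinates and reducing to $\EXT_{\sigma',0}$ and $\EXT_{\sigma'',0}$, the residual obligation being that extensional elements respect the surjectivity of pairing $\pair(\fst\, x)(\snd\, x) = x$. Pinning down precisely how the hypothesis forces this last point is, I expect, the main obstacle of the argument.
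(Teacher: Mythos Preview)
Your plan coincides with the paper's proof: sandwich $f$ as $r_1\circ g\circ i_0$ with $g:=i_1\circ f\circ r_0$, observe that $g$ is strong by hypothesis, and conclude that $f$ is strong because strong maps compose and $r_1\circ g\circ i_0 = f$. The paper states exactly this, in one line. You have correctly isolated the soft spot: on the codomain side the identity $r_1\circ i_1 = 1$ is applied to the extensional value $f(r_0(i_0 x))$, which is unproblematic; but on the domain side one only gets $(r_1\circ g\circ i_0)(x) = f(r_0(i_0 x))$, and collapsing $r_0(i_0 x)$ to $x$ \emph{inside} $f$ is precisely an appeal to the strongness of $f$ that is being proved. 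The paper glosses over this step; you do not, and your caution is warranted.

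In fact the obstacle you anticipate is genuine: the proposition is false over $\hha$ as the paper presents it (recall that surjectivity of pairing is explicitly \emph{not} assumed). Interpret $0$ as $\mathbb N$, function types as full set-theoretic function spaces, and product types as $[[\sigma\times\tau]] := [[\sigma]]\times[[\tau]]\times\{0,1\}$ with $\pair\,a\,b := (a,b,0)$ and the evident projections; take $\equiv$ to be literal identity and $\ext$ to be universally true. All axioms of $\hha$ are satisfied. Because function spaces are honest, $x =_{\tau_0\to 0} y$ collapses to $x\equiv y$, so every instance $\EXT_{\tau_0\to 0,\tau_1\to 0}$ holds trivially. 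But $\EXT_{0\times 0,0}$ fails: the third projection $f(a,b,j):=j$ is extensional, the pairs $(0,0,0)$ and $(0,0,1)$ satisfy $p =_{0\times 0} q$, yet $fp = 0 \not\equiv 1 = fq$. So neither the paper's retraction argument nor your inductive reduction for products can be completed without an extra hypothesis (surjectivity of pairing being the obvious candidate, as it is exactly what makes $\pair(\fst\,p)(\snd\,p)\equiv p$ and hence closes the gap you identified).
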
    
\begin{proof}
    We reason in $\hha$ and assume we are given a map $f:\sigma_0 \to \sigma_1$. We have to show that $f$ is strong under the assumption that $\EXT_{\tau_0 \to 0,\tau_1 \to 0}$ holds for all finite types $\tau_0,\tau_1$. Using the previous proposition, we can exhibit $\sigma_k$ as a strong retract of $\tau_k$ via strong maps $i_k: \sigma_k \to (\tau_k \to 0)$ and $r_k: (\tau_k \to 0) \to \sigma_k$ such that $r_k \circ i_k = 1$ for $k \in \{0, 1 \}$. Let us write $g :\equiv i_1 \circ f \circ r_0$, as in the diagram below.
    \begin{displaymath}
        \begin{tikzcd}
            \sigma_0 \ar[d, bend left, "{i_0}"] \ar[r, "f"] & \sigma_1 \ar[d, bend left, "{i_1}"] \\
            \tau_0 \to 0 \ar[r, "g"] \ar[u, bend left, "{r_0}"] & \tau_1 \to 0 \ar[u, bend left, "{r_1}"]
        \end{tikzcd}
    \end{displaymath}
    By assumption, $g$ is strong. Because strong maps are closed under composition and $r_1 \circ g \circ i_0 = f$, the map $f$ is strong as well.
\end{proof}    

\begin{coro}{reformulationEXT}
    Over $\hha$ the extensionality axiom $\EXT$ is equivalent to
    \begin{align*} \EXT': \quad \ext_{(\sigma \to 0) \to (\tau \to 0)}(f) \to \ext_{\sigma \to 0}(x) \to \ext_{\sigma \to 0}(y) \to \\ \forallext u:\sigma. xu \equiv_0 yu \to \forallext v:\tau. fxv \equiv_0 fyv. \end{align*}
\end{coro}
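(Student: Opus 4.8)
The plan is to recognize that $\EXT'$ is, after unfolding the definition of extensional equality, nothing but the single family of instances $\EXT_{\sigma \to 0, \tau \to 0}$ of the extensionality axiom, and then to invoke \refprop{simplificationofextensionality} for the nontrivial direction.

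First I would verify this correspondence explicitly. By the defining equivalences recalled in \refrema{onlyeqoftype0primitive}, the hypothesis $x =_{\sigma \to 0} y$ abbreviates $\forallext u:\sigma.\, xu =_0 yu$, and the axiom $x =_0 y \leftrightarrow x \equiv_0 y$ rewrites this as $\forallext u:\sigma.\, xu \equiv_0 yu$. In the same way the conclusion $fx =_{\tau \to 0} fy$ unfolds to $\forallext v:\tau.\, fxv \equiv_0 fyv$. Substituting these into $\EXT_{\sigma \to 0,\tau \to 0}$ produces exactly $\EXT'$, with $\sigma$ and $\tau$ playing the roles of the domain and codomain parameters. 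Hence $\EXT'$ and the schema $\{\EXT_{\sigma \to 0,\tau \to 0}\}$ are one and the same, once extensional equality is eliminated in favour of $\equiv_0$ at the base type.

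Given this, the two implications are short. The direction $\EXT \to \EXT'$ is immediate, since $\EXT'$ is simply the instance of $\EXT$ at the types $\sigma \to 0$ and $\tau \to 0$. For the converse, I would assume $\EXT'$ for all $\sigma$ and $\tau$; by the correspondence just established this is precisely $\EXT_{\tau_0 \to 0,\tau_1 \to 0}$ for all finite types $\tau_0,\tau_1$, and \refprop{simplificationofextensionality} then yields $\EXT_{\sigma_0,\sigma_1}$ for all finite types $\sigma_0,\sigma_1$, which is the full axiom $\EXT$.

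I do not expect a genuine obstacle here: the substantive work is already contained in \refprop{retractargument} and \refprop{simplificationofextensionality}, and this corollary is essentially a bookkeeping statement. The only step demanding care is the unfolding of $=_{\sigma \to 0}$ and $=_{\tau \to 0}$, where one must check that the extensional quantifiers $\forallext u:\sigma$ and $\forallext v:\tau$ line up exactly and that the passage from $=_0$ to $\equiv_0$ is applied only at type $0$.
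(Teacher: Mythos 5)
Your proposal is correct and follows exactly the route the paper intends: the corollary is obtained by unfolding $=_{\sigma\to 0}$ and $=_{\tau\to 0}$ via the axioms recalled in \refrema{onlyeqoftype0primitive} to identify $\EXT'$ with the instances $\EXT_{\sigma\to 0,\tau\to 0}$, and then invoking \refprop{simplificationofextensionality} for the nontrivial direction, the other direction being immediate. Nothing is missing.
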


The previous corollary is important for us because it implies that the following converse extensionality principle $\CEXT$ implies $\EXT$.
\begin{align*} \CEXT: \quad \exists Z. \, \forall^\ext f:(\sigma \to 0) \to (\tau \to 0). \, \forall^\ext x,y:\sigma \to 0. \, \forall^\ext v:\tau. \\ \big( \, fxv \not\equiv_0 fyv \to \ext(Zfxyv) \land x(Zfxyv)) \not\equiv_0 y(Zfxyv)  \, \big). \end{align*}
Note that this is indeed a converse extensionality principle as in (\ref{convextprinciple}) from the introduction. As discussed there as well, it will be crucial that we do not demand that the functional $Z$ in $\CEXT$ witnessing this strong form of extensionality is extensional itself. 

\begin{coro}{fromstrongtoweakext}
    $\hha \vdash \CEXT \to \EXT'$ and therefore $\hha \vdash \CEXT \to \EXT$.
\end{coro}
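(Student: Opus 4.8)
The plan is to prove $\hha \vdash \CEXT \to \EXT'$; the second claim then follows at once, since by \refcoro{reformulationEXT} the principles $\EXT'$ and $\EXT$ are equivalent over $\hha$.

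So I reason in $\hha$ and assume $\CEXT$, fixing a functional $Z$ that witnesses it. To derive $\EXT'$, I take extensional $f:(\sigma \to 0) \to (\tau \to 0)$ and extensional $x, y : \sigma \to 0$, assume the hypothesis $\forall^\ext u:\sigma.\, xu \equiv_0 yu$, and fix an arbitrary extensional $v:\tau$; the goal is $fxv \equiv_0 fyv$.

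The core step is a contrapositive reading of $\CEXT$. Suppose, towards a contradiction, that $fxv \not\equiv_0 fyv$. Then $f$, $x$, $y$ and $v$ are all extensional and $fxv \not\equiv_0 fyv$ holds, so the antecedent of $\CEXT$, instantiated at these data, is satisfied. Applying $\CEXT$, I obtain that the element $u :\equiv Zfxyv$ is extensional and satisfies $xu \not\equiv_0 yu$. But since $u$ is extensional, the assumed hypothesis $\forall^\ext u:\sigma.\, xu \equiv_0 yu$ may be instantiated at it, yielding $xu \equiv_0 yu$ and hence a contradiction. This shows $\neg\neg(fxv \equiv_0 fyv)$.

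It remains to discharge the double negation. Since equality at type $0$ is decidable in $\ha$ (provable by induction on the natural numbers), it is stable, so $\neg\neg(fxv \equiv_0 fyv)$ gives $fxv \equiv_0 fyv$, which completes the verification of $\EXT'$. I do not expect a genuine obstacle: the argument is short and direct. The two points that must not be glossed over are that $\CEXT$ explicitly delivers $\ext(Zfxyv)$ --- which is precisely what licenses instantiating the extensional universal hypothesis at the witness, and is the whole reason extensionality of the witness is built into $\CEXT$ --- and the appeal to decidability of $\equiv_0$ to remove the double negation.
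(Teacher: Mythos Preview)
Your argument is correct and is exactly the intended one: the paper states this corollary without proof, treating it as immediate from \refcoro{reformulationEXT} and the definition of $\CEXT$. Your careful handling of the two points you flag --- that $\CEXT$ supplies $\ext(Zfxyv)$ so the extensional hypothesis can be instantiated, and that decidability of $\equiv_0$ removes the double negation --- is precisely what makes the ``obvious'' step go through intuitionistically.
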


\section{Hybrid arithmetic as an intermediate system}

In this section we will show that there are natural interpretations $\eha \to \heha$ and $\heha \to \ha$, respectively. Let us start with the former.

\begin{theo}{formehatoheha} Let $\varphi$ be a formula in the language of $\eha$ with free variables $x_1,\ldots,x_n$ of types $\sigma_1,\ldots,\sigma_n$, respectively, and let $\varphi^*$ be the formula in $\heha$ obtained from $\varphi$ by replacing $\equiv$ with $=$, $\forall x$ with $\forallext x$ and $\exists x$ with $\existsext x$. Then \[ \eha \vdash \varphi \quad \mbox{if and ony if} \quad \heha \vdash \ext_{\sigma_1}(x_1) \to \ldots \to \ext_{\sigma_n}(x_n) \to \varphi^*. \]
\end{theo}
\begin{proof}
    The direction from left to right is shown by induction on the derivation of $\eha \vdash \varphi$, using \refprop{closedtermsextensional} and the lemma below. The direction from right to left follows from the fact that there is an interpretation of $\heha$ into $\eha$ obtained by declaring every element to be extensional and interpreting extensional equality = as equality $\equiv$.
\end{proof}

\begin{lemm}{propertiesofextequalityinheha}
    $\heha \vdash \forallext f,g \, \forallext x, y \, (x = y \land f = g \to fx = gy)$.
 \end{lemm}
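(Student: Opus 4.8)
The plan is to prove the congruence-like statement
\[ \heha \vdash \forallext f,g \, \forallext x,y \, (x = y \land f = g \to fx = gy) \]
by reducing it to the extensionality axiom $\EXT$ together with the basic properties of extensional equality established in \reflemm{propertiesofextequalityinhha}. First I would fix extensional $f, g : \sigma \to \tau$ and extensional $x, y : \sigma$, and assume the hypotheses $x = y$ and $f = g$; the goal is $fx = gy$. The natural strategy is to factor the problem through an intermediate term, namely $fy$, and to argue
\[ fx = fy \quad \text{and} \quad fy = gy, \]
after which transitivity of $=$ (\reflemm{propertiesofextequalityinhha}(iii)) yields $fx = gy$.

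Next I would establish the two halves of this factorization. For $fx = fy$, I would invoke the extensionality axiom $\EXT$ directly: since $f$ is extensional, $x$ and $y$ are extensional, and $x = y$ holds, the axiom gives $fx = fy$. For $fy = gy$, I would use the defining axiom for extensional equality at function type, namely $f =_{\sigma \to \tau} g \leftrightarrow \forallext x:\sigma.fx =_\tau gx$. Applying the left-to-right direction to the hypothesis $f = g$ and instantiating at the extensional element $y$ gives exactly $fy = gy$. Combining the two and using transitivity completes the argument.

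The step I expect to require the most care is making sure all the extensionality side conditions are genuinely available where the axioms demand them. The axiom $f = g \leftrightarrow \forallext x.fx = gx$ only lets us instantiate at \emph{extensional} inputs, so it is essential that $y$ is assumed extensional; and $\EXT$ likewise requires the extensionality of $f$, $x$, and $y$, all of which are supplied by the $\forallext$ quantifiers in the statement. None of these obstacles is serious, but the statement is precisely the point at which the co-existence of extensional and non-extensional objects matters: the conclusion would fail without the extensionality hypotheses, since for non-extensional $f$ the value $fx$ need not be determined by the extensional equivalence class of $x$. Thus the proof is essentially a bookkeeping exercise in chaining $\EXT$ with the function-type axiom for $=$ and transitivity, with the only real content being the verification that each extensionality hypothesis is in scope at the moment it is used.
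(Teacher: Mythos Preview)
Your proposal is correct and matches the paper's own proof essentially line for line: apply $\EXT$ to $f$ to get $fx = fy$, use $f = g$ (via the defining axiom for $=_{\sigma\to\tau}$) instantiated at the extensional $y$ to get $fy = gy$, and conclude by transitivity. The paper's proof is terser but follows exactly the same decomposition through the intermediate term $fy$.
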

 \begin{proof}
     The extensionality axiom $\EXT$ applied to $f$ gives us that $fx = fy$. From $f = g$ we obtain $fy = gy$ and therefore $fx = gy$.
 \end{proof}

\begin{theo}{eliminationofext}
    There is a proof-theoretic interpretation $\heha \to \ha$ which is the identity on $\ha$-formulas. Therefore $\heha$ is a conservative extension of $\ha$.
\end{theo}
\begin{proof}
    The idea behind the interpretation of $\heha$ into $\ha$ is to regard $=_\sigma$ and $\ext_\sigma$ as abbreviations defined by induction on $\sigma$, as follows.
    \begin{eqnarray*}
        \ext_0(x) & :\equiv & \top \\
        x =_0 y & :\equiv & x \equiv_0 y \\ \\
        \ext_{\sigma \times \tau}(x) & :\equiv & \ext_\sigma(\fst \, x) \land \ext_\tau(\snd \, x) \\
        x =_{\sigma \times \tau} y &:\equiv& \fst \, x =_\sigma \fst \, y \land \snd \, x =_\tau \snd \, y \\ \\
        \ext_{\sigma \to \tau}(f) &:\equiv& \forall x:\sigma \, \big( \, \ext_\sigma(x) \to \ext_\tau(fx) \, \big) \land \\
        & & \forall x,y:\sigma \, \big( \, x=_\sigma y \to \ext_\sigma(x) \to \ext_\sigma(y) \to fx=_\tau fy \, \big) \\
        f =_{\sigma \to \tau} g &:\equiv & \forall x:\sigma  \, \big( \, \ext_\sigma(x) \to fx =_\tau gx \, \big)
    \end{eqnarray*}
    To verify the soundness of this interpretation, we only need to check that it soundly interprets the axioms of $\heha$. This is trivial for most of them, the axioms of the form $\ext({\bf c})$ for the various combinators ${\bf c}$ being the exception; we will only discuss this for the $\scom$-combinator, as the argument for the other combinators is similar.

    We start by showing that ${\bf s}xy$ will be extensional whenever $x$ and $y$ are. This involves two things: showing that ${\bf s}xyz$ is extensional whenever $z$ is, and showing that ${\bf s}xyz = {\bf s}xyz'$ whenever $z,z'$ are extensional and $z = z'$.

    The former follows because $xy(xz)$ will be extensional as soon as $x,y,z$ are and ${\bf s}xyz \equiv xy(xz)$. Similarly, if $z = z'$ and $z$ and $z'$ are extensional, then ${\bf s}xyz \equiv xy(xz) = xy(xz') \equiv {\bf s}xyz'$, because $x$ and $xy$ are extensional.

    The next step is to show that ${\bf s}x$ is extensional whenever $x$ is. For that we need to show that ${\bf s}xy = {\bf s}xy'$ whenever $y$ and $y'$ are extensional and $y = y'$; in other words, we should show that ${\bf s}xyz = {\bf s}xy'z$ whenever $z$ extensional. However, we have $xy = xy'$, because $x$ is extensional, and therefore ${\bf s}xyz \equiv xy(xz) = xy'(xz) = {\bf s}xy'z$, because $xz$ is extensional.
    
    It remains to show that ${\bf s}x = {\bf s}x'$ whenever $x = x'$ and $x$ and $x'$ are extensional, which can be done in a way similar to what we have seen before.
\end{proof}

\begin{rema}{interpretingmore}
    Note that this translation also validates $x = y \to \ext(x) \to \ext(y)$, which we can show by induction on the type $\sigma$.
\end{rema}

\begin{rema}{eliminatingext}
    The composed translation $\eha \to \heha \to \ha$ is nothing but Gandy and Luckhardt's elimination of extensionality. So what we have done is factor this translation as a composition of two translations with $\heha$ as the intermediate system.
\end{rema}

\section{The $\alpha$-translation}

The interpretation of $\heha$ into $\ha$ that we presented in the previous section is in many ways the canonical one. In this section, which is the heart of this paper, we will present an alternative: the $\alpha$-translation, inspired by Brouwer's notion of apartness. The main benefit of this translation is that it allows us to eliminate the strong extensionality principle $\CEXT$, that we introduced in Section 3.

\begin{table} 
    \caption{Auxiliary definitions for the $\alpha$-translation} \label{tab:auxdefinitions}
\begin{eqnarray*}
    0^+ & :\equiv & 0 \\
    0^- & :\equiv & 0 \\
    {\rm dom}_0 & :\equiv & \top \\
    {\rm app}_0 & :\equiv & x \not\equiv_0 y \\ \\
    (\sigma \times \tau)^+ & :\equiv & \sigma^+ \times \tau^+ \\
    (\sigma \times \tau)^- &:\equiv & (\sigma^- \times \tau^-) \times 0 \\
    {\rm dom}_{\sigma \times \tau} &:\equiv & {\rm dom}_\sigma(\fst \, x) \land {\rm dom}_\tau(\snd \, x) \\
    {\rm app}_{\sigma \times \tau} &:\equiv & \big( \, \snd \, z \equiv 0 \to {\rm app}_\sigma(\fst \, x, \fst \, y, \fst (\fst \, z)) \, \big) \land \\ 
    & & \big( \, \, \snd \, z \not\equiv 0 \to {\rm app}_\sigma(\snd \, x, \snd \, y, \snd (\, \fst \, z)) \, \big) \\ \\
    (\sigma \to \tau)^+ & :\equiv & (\sigma^+ \to \tau^+) \times (\sigma^+ \to \sigma^+ \to \tau^- \to \sigma^-) \\
    (\sigma \to \tau)^- & :\equiv & \sigma^+ \times \tau^- \\
    {\rm dom}_{\sigma \to \tau} & :\equiv & \forall u:{\sigma^+} ( \, {\rm dom}_\sigma(u) \to {\rm dom}_\tau((\fst \, x)u) \, ) \land 
    \forall u:{\sigma^+}, v:{\sigma^+}, w:{\tau^-} \,  \\ & & \big( \, {\rm dom}_\sigma(u) \to {\rm dom}_\tau(v) \to {\rm app}_\tau((\fst \, x)(u), (\fst \, x)(v),w) \to \\
    & & {\rm app}_\sigma(u,v,(\snd \, x)uvw) \, \big) \\
    {\rm app}_{\sigma \to \tau} &:\equiv& {\rm dom}_\sigma(\fst \, z) \land {\rm app}_\tau((\fst \, x)(\fst \, z), (\fst \, y)(\fst \, z), \snd \, z) 
    \end{eqnarray*}
\end{table}

As a first step towards defining the $\alpha$-translation, we define for each finite type $\sigma$ two finite types $\sigma^+$ and $\sigma^-$, as well as two formulas ${\rm dom}_\sigma$ and ${\rm app}_\sigma$ in the language of $\ha$. The formula  ${\rm dom}_\sigma$ will have one free variable $x$ of type $\sigma^+$ and ${\rm app}_\sigma$ has free variables $x,y,z$ of types $\sigma^+, \sigma^+$ and $\sigma^-$, respectively. Borrowing from the categorical analysis of modified realizability the distinction between potential and actual realizers, we can give the following intuition for these definitions:
\begin{enumerate}
    \item The elements of type $\sigma^+$ stand for elements of type $\sigma$ together with an argument (potential realizer) for their extensionality. 
    \item The predicate ${\rm dom}_\sigma$ holds if that argument is successful (if the potential realizer is an actual realizer).
    \item Elements of type $\sigma^-$ are arguments (potential realizers) for the statement that two elements of type $\sigma$ are apart from each other.
    \item The predicate ${\rm app}_\sigma$ holds if that argument is successful (that potential realizer is an actual realizer).
    \item Crucially, potential realizers always exist, because for any type $\sigma$ there is a closed term $0:\sigma$. Of course, these need not be actual realizers.
\end{enumerate}
The clauses can be found in Table \ref{tab:auxdefinitions}. Note that if $s: (\sigma \to \tau)^+$ and $t: \sigma^+$, then $(\fst \, s)t: \tau^+$. In what follows we will write $s * t$ for $(\fst \, s)t$, so that $s * t: \tau^+$.

\begin{lemm}{basicpropofapartness}
The following statements are provable in $\ha$ for each type $\sigma$:
\begin{enumerate}
    \item There is no $z$ such that ${\rm app}_\sigma(x,x,z)$.
    \item There is a functional $s: \sigma^+ \to \sigma^+ \to \sigma^- \to \sigma^-$, such that if ${\rm app}_\sigma(x,y,z)$, then ${\rm app}_\sigma(y,x,sxyz)$.
    \item There is a functional $t: \sigma^+ \to \sigma^+ \to \sigma^+ \to \sigma^- \to \sigma^- \times 0$, which given $x, y, z:\sigma^+$ and $u:\sigma^-$ such that ${\rm app}_\sigma(x,y,u)$, computes an element $t = txyzu$ such that ${\rm app}_\sigma(x,z,\fst \, t)$ whenever $\snd \, t \equiv_0 0$ and ${\rm app}_\sigma(y,z,\snd \, t)$ whenever $\snd \, t \not\equiv_0 0$.
\end{enumerate}    
\end{lemm}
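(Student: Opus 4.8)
The plan is to prove all three statements simultaneously by induction on the type structure of $\sigma$, where for (2) and (3) the induction also produces the witnessing functionals $s$ and $t$ as explicit closed terms of $\ha$, built using \refprop{combinatorycompleteness}, pairing, the closed terms $0_\sigma$ from \reflemm{closedtermofanytype}, and---at the base type---definition by cases on the decidable predicate $\equiv_0$. The guiding observation is that the inductive steps for product and function types are essentially bookkeeping that transports an apartness witness through the operation $(\cdot)^-$, whereas the genuine mathematical content lives entirely in the base cases.

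For statement (1) the base case is immediate, since ${\rm app}_0(x,x,z)$ unfolds to $x \not\equiv_0 x$, which is absurd. At a product type $\sigma \times \tau$ any putative $z$ splits, via the decidable test $\snd\, z \equiv_0 0$, into a claim ${\rm app}_\sigma(\fst\, x,\fst\, x,\cdot)$ or a claim ${\rm app}_\tau(\snd\, x,\snd\, x,\cdot)$, each refuted by the induction hypothesis at the corresponding component; at a function type $\sigma \to \tau$ the conjunct ${\rm app}_\tau(x * (\fst\, z),\, x * (\fst\, z),\, \snd\, z)$ is refuted by the induction hypothesis at $\tau$. For statement (2), at type $0$ the predicate ${\rm app}_0$ ignores its third argument and $\not\equiv_0$ is symmetric, so any term of type $0^-$ serves as $sxyz$; at a product we keep the flag $\snd\, z$ and apply the two component symmetry functionals to $\fst(\fst\, z)$ and $\snd(\fst\, z)$; at a function type we keep the input $\fst\, z$ and apply the codomain symmetry functional to the output witness $\snd\, z$.

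Statement (3) is the heart of the lemma. At type $0$, from $x \not\equiv_0 y$ and an arbitrary $z$ we decide $x \equiv_0 z$: if it holds then necessarily $y \not\equiv_0 z$ (otherwise $x \equiv_0 y$), so we return the outer flag $\snd\, t \not\equiv_0 0$, and otherwise we return $\snd\, t \equiv_0 0$; the realizer component $\fst\, t$ is irrelevant and may be taken to be $0$. At a product type we branch on $\snd\, u$: in the branch $\snd\, u \equiv_0 0$ we feed $\fst\, x,\fst\, y,\fst\, z$ and the witness $\fst(\fst\, u)$ to the functional $t_\sigma$ obtained from the induction hypothesis, getting $t'$, package $\fst\, t'$ into a $(\sigma \times \tau)^-$-witness carrying the inner flag $0$, and copy $\snd\, t'$ to the outer flag $\snd\, t$; the branch $\snd\, u \not\equiv_0 0$ is symmetric, using $t_\tau$ and inner flag $1$. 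At a function type we set $a :\equiv \fst\, u$, note that ${\rm dom}_\sigma(a)$ is guaranteed by the hypothesis ${\rm app}_{\sigma \to \tau}(x,y,u)$, apply $t_\tau$ to $x * a,\, y * a,\, z * a$ and the witness $\snd\, u$, reuse $a$ as the input part of the new $(\sigma \to \tau)^-$-witness, take $\fst\, t'$ as its output part, and again copy $\snd\, t'$ to the outer flag.

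The only genuine mathematical input is the decidability of equality at type $0$, used in the base case of (3) to decide whether $x$ or $y$ is apart from $z$. The delicate part---and the step I expect to have to be most careful about---is the flag management in statement (3): one must keep two separate booleans straight, namely the flag internal to $(\cdot)^-$ that records which component or branch of a product is responsible for the apartness, and the outer flag $\snd\, t$ that records whether it is $x$ or $y$ that has been shown apart from $z$. In the product case these interact, since the inner flag is fixed by the branch on $\snd\, u$ while the outer flag must be copied from the recursive call; verifying that the packaged witness really satisfies ${\rm app}$ under the correct outer flag is then a direct, if somewhat tedious, unfolding of the clauses in Table~\ref{tab:auxdefinitions}.
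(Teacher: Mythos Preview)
Your proof is correct and is exactly the natural argument: a simultaneous induction on the type structure of $\sigma$, with the only real content being the use of decidability of $\equiv_0$ in the base case of (3). The paper in fact gives no proof of this lemma at all, treating it as routine; your write-up supplies precisely the details one would expect, including the careful separation of the inner flag (which component of a product, or which input for a function type, witnesses the apartness) from the outer flag $\snd\,t$ (which of $x$ or $y$ is apart from $z$), and the observation that case distinctions on natural-number flags are definable via the recursor.
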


\begin{prop}{alphatranslationcombinators}
    For each combinator ${\bf c}:\sigma$ as in Table \ref{combinators} there is a combinator ${\bf c}^\alpha: \sigma^+$ with $\ha \vdash {\rm dom}({\bf c}^\alpha)$, which, provably in $\ha$, satisfies the defining equation for that combinator with respect to $*$. For instance, for each pair of finite types $\sigma, \tau$ there is a combinator ${\bf k}^\alpha:(\sigma \to \tau \to \sigma)^+$ such that $\ha \vdash {\rm dom}({\bf k}^\alpha)$ and $\ha \vdash {\bf k}^\alpha*x*y \equiv x$.
\end{prop}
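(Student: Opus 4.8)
The plan is to construct the combinators $\mathbf{c}^\alpha$ explicitly, type by type, and then verify two things for each: first that $\mathrm{dom}(\mathbf{c}^\alpha)$ holds, and second that the relevant defining equation holds with respect to $*$. Recall that an element of type $(\sigma \to \tau)^+$ is a pair consisting of a ``function part'' in $\sigma^+ \to \tau^+$ and an ``apartness-tracking part'' in $\sigma^+ \to \sigma^+ \to \tau^- \to \sigma^-$, and that $\mathrm{dom}_{\sigma \to \tau}$ demands precisely that the function part sends $\mathrm{dom}$-elements to $\mathrm{dom}$-elements and that the apartness-tracking part genuinely pulls back witnesses of $\mathrm{app}_\tau$ to witnesses of $\mathrm{app}_\sigma$. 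So the real content of defining $\mathbf{c}^\alpha$ is to supply, alongside the expected underlying operation on $+$-types, a term that traces counter-evidence backwards through that operation.

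I would carry this out one combinator at a time. For $\mathbf{k}^\alpha : (\sigma \to \tau \to \sigma)^+$ the function part is the $\lambda$-term (via \refprop{combinatorycompleteness}) that takes $x:\sigma^+$ to the element of $(\tau \to \sigma)^+$ whose function part is constant-$x$ and whose tracking part returns the given $\sigma^-$-witness unchanged; the defining equation $\mathbf{k}^\alpha * x * y \equiv x$ then holds definitionally once the $*$ notation is unfolded, since $*$ is just application of the first projection. For $\mathbf{s}^\alpha$ the situation is the genuinely involved one: the function part must reconstruct $xz(yz)$ on $+$-types, and the tracking part must, given counter-evidence that $\mathbf{s}xyz$ and $\mathbf{s}xyz'$ differ, produce counter-evidence at $\sigma^+$; here I expect to need the witnesses supplied by \reflemm{basicpropofapartness}, in particular the transitivity-like splitting in part (3), to combine the apartness data coming from the two occurrences of $z$. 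The combinators $\mathsf{pair}^\alpha, \mathsf{fst}^\alpha, \mathsf{snd}^\alpha$ are handled using the product clauses $(\sigma \times \tau)^\pm$ and the evident projections/pairings, and $\mathbf{0}^\alpha, S^\alpha$ at type $0$ are essentially trivial because $0^+ = 0^- = 0$ and $\mathrm{app}_0$ is just $\not\equiv_0$. The recursor $\mathbf{R}^\alpha$ is treated by defining its action by the ordinary recursor $\mathbf{R}$ on $+$-types for the function part, and then establishing $\mathrm{dom}(\mathbf{R}^\alpha)$ and the two recursion equations by induction on the numerical argument, which is available since induction holds for all formulas of $\ha$.

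In every case the verification of $\mathrm{dom}(\mathbf{c}^\alpha)$ is the step that forces the tracking part to be defined correctly: it is not enough that the underlying operation behaves well, one must check that the supplied backward map actually witnesses $\mathrm{app}_\sigma(u,v,\cdots)$ under the hypothesis $\mathrm{app}_\tau(\cdots)$. I expect the main obstacle to be exactly the $\mathbf{s}$-combinator, because there the output depends on the input $z$ in two different argument positions, so a single counter-evidence at the output of $\mathbf{s}xyz$ must be routed back through both the ``$xz$'' and the ``$yz$'' paths, and one has to argue by cases (using the splitting from \reflemm{basicpropofapartness}(3)) which of the two is responsible for the apartness and then assemble a single $\sigma^-$-witness. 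Once $\mathbf{s}^\alpha$ is dealt with, the remaining combinators are routine, and the defining equations with respect to $*$ all reduce, after unfolding $s * t = (\fst\,s)t$, to the corresponding equations for the chosen function parts, which hold by \refprop{combinatorycompleteness} and the combinator axioms of $\ha$.
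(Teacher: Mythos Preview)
Your plan is essentially the paper's own: construct each ${\bf c}^\alpha$ explicitly by supplying, at every curried level, a function part on $+$-types together with an apartness-tracking part, and verify ${\rm dom}$ using \reflemm{basicpropofapartness}. You correctly single out ${\bf s}^\alpha$ as the delicate case and correctly anticipate needing the cotransitivity-style splitting from \reflemm{basicpropofapartness}(3).

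Two small but genuine gaps are worth flagging. First, your description of ${\bf k}^\alpha$ says the inner tracking part of ${\bf k}^\alpha * x:(\tau\to\sigma)^+$ ``returns the given $\sigma^-$-witness unchanged''. That does not type-check: this tracking part has type $\tau^+\to\tau^+\to\sigma^-\to\tau^-$, so it must produce a $\tau^-$-element. The paper instead returns the default term $0:\tau^-$ here, and the justification that this suffices for ${\rm dom}$ is item~(1) of \reflemm{basicpropofapartness}: the incoming $\sigma^-$-witness would have to witness apartness of $x$ from itself, which is never actual, so the implication in ${\rm dom}_{\tau\to\sigma}$ is vacuously satisfied. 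This ``irreflexivity makes the branch dead, so return a dummy'' trick recurs for $\pair^\alpha$ and in the base case of the recursor, and is a key ingredient you should make explicit.

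Second, bear in mind that for a curried combinator like ${\bf s}:\rho\to\sigma\to\tau$ the $+$-translation demands a separate tracking component at \emph{each} level of currying: one backtracking apartness when $x$ varies, one when $y$ varies, one when $z$ varies. You only spell out the $z$-variation case for ${\bf s}^\alpha$; the paper supplies all three (your splitting argument is indeed needed for the $z$-case, while the $y$-case uses the second component already stored in $x:(\rho\to\sigma\to\tau)^+$, and the $x$-case is direct). None of this changes your overall strategy, but the finished proof has to cover all of them.
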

\begin{proof}
    Let us treat the combinators $\fst,\snd,\pair$ first. We need a combinator
    \[ \fst^\alpha: (\sigma \times \tau \to \tau)^+ = (\sigma^+ \times \tau^+ \to \sigma^+) \times (\sigma \times \tau \to \sigma \times \tau \to \sigma^- \to (\sigma^- \times \tau^-) \times 0). \]
    We put $\fst^\alpha :\equiv \pair \, \fst \,  (\lambda x,x':\sigma \times \tau.\lambda e:\sigma^-.\pair(\pair \, e \, 0) \, 0))$. Note what the second component does: given $x, x':\sigma \times \tau$ and a potential realizer $e$ for the apartness of $\fst^\alpha * x$ and $\fst^\alpha * x'$, it computes a potential realizer of the apartness of $x$ and $x'$; this is done in such a way that if $e$ is an actual realizer for the apartness of $\fst^\alpha * x$ and $\fst^\alpha * x'$, then the potential realizer thus obtained is an actual realizer for the apartness of $x$ and $y$. In a similar fashion, we put $\snd^\alpha :\equiv \pair \, \snd \,  (\lambda x,x':\sigma \times \tau.\lambda e:\tau^-.\pair(\pair \, 0 \, e) \, (S0))$.

    Next, we need a combinator $\pair^\alpha$ having type $(\sigma \to \tau \to \sigma \times \tau)^+$, which is
    \begin{align*} \big( \, \sigma^+ \to (\tau^+ \to \sigma^+ \times \tau^+) \times (\tau^+ \to \tau^+ \to (\sigma^- \times \tau^-) \times 0 \to \tau^-) \, \big) \times \\ \big( \, \sigma^+ \to \sigma^+ \to (\tau^+ \times ((\sigma^- \times \tau^-) \times 0)) \to \sigma^- \, \big) \end{align*}
    This really consists of three parts: from $x:\sigma^+, y:\tau^+$ we should compute an element of type $(\sigma \times \tau)^+ = \sigma^+ \times \tau^+$, for which we take $\pair \, x \, y$. Secondly, for any $x:\sigma^+,y,y':\tau^+$ and a potential realizer $e$ of the apartness of $\pair \, x \, y$ and $\pair \, x \, y'$ we compute a potential realizer of the apartness of $y$ and $y'$. For that we choose $\snd \, (\fst \, e)$. This has the additional property that if $e$ was an actual realizer, then so is $\snd \, (\fst \, e)$; to see this, note that there is never an actual realizer for the apartness of $x$ with itself, by item (1) from \reflemm{basicpropofapartness}. Thirdly, for any $x,x':\sigma^+, y: \tau^+$ and potential realizer $e$ of the apartness of $\pair \, x \, y$ and $\pair \, x' \, y$ we need to compute a potential realizer of the apartness of $x$ and $x'$: for that we take $\fst( \, \fst \, e)$. Again, this will be actual realizer whenever $e$ is.

    We see here the pattern that repeats itself for the other combinators: in each case, we define a certain function $f(x_1,\ldots,x_n)$. In addition, we should show that if we have a value $f(x_1,\ldots,x_n)$ and we change the input on one entry, say we change $x_i$ to $x_i'$, then from the inputs and a potential realizer of the apartness of the outputs $f(x_1,\ldots,x_n)$ and $f(x_1,\ldots,x_{i-1},x_i',x_{i+1}, \ldots x_n)$, we are able to compute a potential realizer of the apartness of $x_i$ and $x_i'$; this needs to be done in such a way that if the potential realizer of the apartness of the outputs was an actual realizer, then so is the potential realizer of the apartness of $x_i$ and $x_i'$ that we compute.

    To define the combinator ${\bf k}^\alpha$ we therefore need to supply three functions. First of all, given $x: \sigma^+$ and $y: \tau^+$, we output $f(x,y) :\equiv x$. Secondly, given $x,x',y$ and a potential realizer $e$ of the apartness of $f(x,y) = x$ and $f(x',y) = x'$, we output $e$ as a potential realizer of the apartness of $x$ and $x'$. Thirdly, given $x,y,y'$ and a potential realizer of the apartness of $f(x,y) = x$ and $f(x,y') = x$, we output $0$ as a potential realizer of the apartness of $y$ and $y'$. (Note that $e$ will never be an actual realizer, again by item (1) from \reflemm{basicpropofapartness}.)

    To define the combinator ${\bf s}^\alpha$, we need to supply four functions. 
    \begin{enumerate}
        \item[(i)] Given $x: (\rho \to \sigma \to \tau)^+$, $y: (\rho \to \sigma)^+$ and $z: \rho^+$, we have $f(x,y,z) = x * z * (y * z): \tau^+$. 
        \item[(ii)] If $e: \tau^-$ is a potential realizer of the apartness of $f(x,y,z)$ and $f(x',y,z)$, then $\pair \, z \, (\pair \, (y * z) \, e): \rho^+ \times (\sigma^+ \times \tau^-)$ is a potential realizer of the apartness of $x$ and $x'$. 
        \item[(iii)] Suppose $e:\tau^-$ is a potential realizer of the apartness of $f(x,y,z)$ and $f(x,y',z)$. Note that from $x$ we can extract a function $a: \rho^+ \to \sigma^+ \to \sigma^+ \to \tau^- \to \sigma^-$ such that if $d$ is a potential realizer of the apartness of $x * u * v$ and $x * u * v'$, then $auvv'd$ is a potential realizer of the apartness of $v$ and $v'$. Therefore $az(y * z)(y' * z)$ is a potential realizer of the apartness of $y * z$ and $y' * z$, which implies that $\pair \, z \, (az(y * z)(y' * z))$ is a potential realizer of the apartness of $y$ and $y'$.
        \item[(iv)] Suppose we are given a potential realizer of the apartness of $f(x,y,z)$ and $f(x,y,z')$. Let us first of all note that from elements $f,f':(\sigma \to \tau)^+$ and $u,u':\sigma^+$ and a potential realizer of the apartness of $f * u$ and $f' * u'$ we can compute a potential realizer of the apartness of $f$ and $f'$ or a potential realizer of the apartness of $u$ and $u'$; indeed, using item (iii) from the previous lemma we can compute from a potential realizer of the apartness of $f * u$ and $f' * u'$ a potential realizer of the apartness of $f * u$ and $f' * u$ or a potential realizer of the apartness of $f' * u$ and $f' * u'$. In the former case the pair consisting of $u$ and that potential realizer is a potential realizer of the apartness of $f$ and $f'$; in the latter case we can use the second component of $f'$ to give us a potential realizer of the apartness of $u$ and $u'$. In the case at hand this means that we find either a potential realizer of the apartness of $x * z$ and $x * z'$ or a potential realizer of the apartness of $y * z$ and $y * z'$. In the first case we use the second component of $x$ and in the second case the second component of $y$ to give us a potential realizer of the apartness of $z$ and $z'$.
    \end{enumerate}

    For the arithmetical part, we need terms $0^\alpha:0^+ = 0$ as well as $S^\alpha: (0 \to 0)^+ = (0 \to 0) \to (0 \to 0 \to 0 \to 0)$, for which we choose $0^\alpha :\equiv 0$ and $S^\alpha :\equiv \pair \, S \, (\lambda x,y,z:0.z)$. Finally, we need a term $\Rcom^\alpha: (\sigma \to (0 \to \sigma \to \sigma) \to 0 \to \sigma))^+$, for which we need to supply four functions:
    \begin{enumerate}
        \item[(i)] Given $x:\sigma^+$, $y:(0 \to \sigma \to \sigma)^+$ and $n: 0$, we need an element $f(x,y,n)$ of type $\sigma^+$, for which we choose $f(x,y,n) :\equiv \Rcom \, x \, (\lambda m:0.\lambda z:\sigma^+.y*m*z)  \, n$.
        \item[(ii)] Next, suppose we are given a potential realizer of the apartness of $f(x,y,n)$ and $f(x',y,n)$. If $n \equiv 0$, then this also a potential realizer of teh apartness of $x$ and $x'$. If $f \equiv Sm$, then we can use the second component of $y$ to compute from this a potential realizer of the apartness of $f(x,y,m)$ and $f(x',y,m)$. This procedure we repeat until we are in the first case. 
        \item[(iii)] Next, suppose we are given a potential realizer of the apartness of $f(x,y,n)$ and $f(x,y',n)$. If $n \equiv 0$, then this is a potential realizer of the apartness of $x$ with itself; such an element is never an actual realizer, so when we can simply choose $0$ as the potential realizer of the apartness of $y$ and $y'$. If $n \equiv Sm$, then we are given a potential realizer of the apartness of $y * m * f(x,y,m)$ and $y' * m * f(x,y',m)$. Using the remark from (iv) above, we obtain from this (1) a potential realizer of the apartness of $y$ and $y'$, or (2) a potential realizer of the apartness of $m$ with itself, or (3) a potential realizer of the apartness of $f(x,y,m)$ and $f(x,y',m)$. In the first case we are finished; in the second case we output 0 as a potential realizer of the apartness of $y$ and $y'$; in the third case, we repeat the algorithm for $m$.
        \item[(iv)] Finally, suppose we are given a potential realizer of the apartness of $f(x,y,n)$ and $f(x,y,n')$. We need a potential realizer of the apartness of $n$ and $n'$, for which we simply choose 0.
    \end{enumerate}
    The remaining verifications are not too difficult and left to the reader.
\end{proof}    

In what follows we assume that we have fixed the terms ${\bf c}^\alpha$ as in the proof of the previous proposition. We wish to extend this to a translation $t^\alpha$ for any term $t$. In order to do so, we will assume in addition that we have assigned to each variable $x$ of type $\sigma$ a variable $x^\alpha$ of type $\sigma^+$ in such a way that for any pair of distinct variables $x, y$ of type $\sigma$ the variables $x^\alpha$ and $y^\alpha$ are distinct as well.

\begin{defi}{alphatranslationterms} To each term $t$ of $\heha$ of type $\sigma$ with free variables $x_1,\ldots,x_n$ of types $\sigma_1,\ldots,\sigma_n$, respectively, we associate a term $t^\alpha$ of $\ha$ of type $\sigma^+$ with free variables $x_1^\alpha,\ldots,x_n^\alpha$ of types $\sigma^+_n,\ldots,\sigma^+_n$, respectively, as follows:
\begin{eqnarray*}
    x^\alpha & :\equiv & x^\alpha, \\
    {\bf c}^\alpha & :\equiv & {\bf c}^\alpha, \\
    (s \, t)^\alpha & :\equiv & s^\alpha * t^\alpha.
\end{eqnarray*}
\end{defi}
  
\begin{definition}
    To each formula $\varphi$ of $\heha$ with free variables $x_1,\ldots,x_n$ of types $\sigma_1,\ldots,\sigma_n$, respectively, we associate a formula $\varphi^\alpha$ of $\ha$ with free variables $x_1^\alpha,\ldots,x_n^\alpha$ of types $\sigma^+_n,\ldots,\sigma^+_n$, respectively, as follows:
\begin{eqnarray*}
    \big( \, \ext_\sigma(t) \, \big)^\alpha &:\equiv & {\rm dom}_\sigma(t^\alpha) \\
    \big( \, s \equiv_\sigma t \, \big)^\alpha & :\equiv & s^\alpha \equiv_{\sigma^+} t^\alpha \\
    \big( \, s =_\sigma t \, \big)^\alpha& :\equiv & \lnot \exists x:{\sigma^-}. \, {\rm app}(s^\alpha, t^\alpha, x) \\
    \bot^\alpha & :\equiv & \bot \\
    \big( \, \varphi \Box \psi \, \big)^\alpha & :\equiv & \varphi^\alpha \Box \psi^\alpha \mbox{ for } \Box \in \{ \lor, \land, \to \} \\
    \big( \, \exists x:\sigma. \, \varphi \, \big)^\alpha & :\equiv & \exists x:{\sigma^+}. \, \varphi^\alpha \\
    \big( \, \forall x:\sigma. \, \varphi \, \big)^\alpha & :\equiv & \forall x:{\sigma^+}. \, \varphi^\alpha
\end{eqnarray*}

\end{definition}

\begin{theorem} If $\heha \vdash \varphi$, then $\ha \vdash \varphi^\alpha$.
\end{theorem}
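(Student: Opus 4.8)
The plan is to argue by induction on the derivation of $\heha \vdash \varphi$ in multi-sorted intuitionistic predicate logic, checking that each inference rule is preserved and that the translation of every non-logical axiom of $\heha$ is provable in $\ha$. The logical rules take care of themselves once two substitution lemmas are recorded. First, at the level of terms, $(t[s/x])^\alpha \equiv t^\alpha[s^\alpha/x^\alpha]$, by a trivial induction on $t$ using the clauses $x^\alpha \equiv x^\alpha$, ${\bf c}^\alpha \equiv {\bf c}^\alpha$ and $(st)^\alpha \equiv s^\alpha * t^\alpha$. Second, the induced lemma at the level of formulas, $(\varphi[s/x])^\alpha$ is $\varphi^\alpha[s^\alpha/x^\alpha]$, by induction on $\varphi$. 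Because the propositional connectives are translated homomorphically and both quantifiers are uniformly relativised from the type $\sigma$ to the type $\sigma^+$, the rules of intuitionistic predicate logic (including the quantifier rules, which rely on exactly this substitution lemma) are then validated without further work, and it only remains to treat the axioms.

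For the purely arithmetical axioms I would first record the computations $0^\alpha \equiv 0$, $S^\alpha * x^\alpha \equiv S x^\alpha$, ${\rm dom}_0 = \top$ and ${\rm app}_0(a,b,z) = (a \not\equiv_0 b)$, which show that the translation is essentially transparent on type $0$. The equality axioms for $\equiv$ translate to the same axioms on the types $\sigma^+$ and hold in $\ha$; for the congruence axiom one uses that $s^\alpha * t^\alpha = (\fst\,s^\alpha)t^\alpha$ is built from $\fst$ and application, both congruences. The combinator equations translate, via the term-substitution lemma, to the defining equations for the ${\bf c}^\alpha$ with respect to $*$, which are provided by \refprop{alphatranslationcombinators}. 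The Peano axioms reduce to those of $\ha$ after rewriting $S^\alpha * -$ as $S$, and the induction schema of $\heha$ translates, after the same rewrites, to the induction schema of $\ha$ applied to the $\ha$-formula $\varphi^\alpha$. The one axiom needing a genuine remark is $x =_0 y \leftrightarrow x \equiv_0 y$: unfolding, $(x =_0 y)^\alpha$ is $\lnot\exists z:0.\,(x^\alpha \not\equiv_0 y^\alpha)$, which is equivalent to $\lnot\lnot(x^\alpha \equiv_0 y^\alpha)$, so the equivalence holds precisely because $\equiv_0$ is decidable in $\ha$.

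The substance of the argument is the verification of the hybrid axioms of $\hha$ and of $\EXT$, where I would unfold the clauses for ${\rm dom}$ and ${\rm app}$ at product and function types from Table \ref{tab:auxdefinitions}. The axioms $\ext({\bf c})$ translate to ${\rm dom}({\bf c}^\alpha)$, again part of \refprop{alphatranslationcombinators}. The axiom $\ext_{\sigma\to\tau}(f)\to\ext_\sigma(x)\to\ext_\tau(fx)$ translates to ${\rm dom}_{\sigma\to\tau}(f^\alpha)\to{\rm dom}_\sigma(x^\alpha)\to{\rm dom}_\tau(f^\alpha * x^\alpha)$, which is immediate from the first conjunct of ${\rm dom}_{\sigma\to\tau}$ since $f^\alpha * x^\alpha = (\fst\,f^\alpha)x^\alpha$. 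The axiom $x\equiv_\sigma y\to\ext_\sigma(x)\to\ext_\sigma(y)$ follows from the replacement schema for $\equiv$, which is provable in $\ha$ because ${\rm dom}_\sigma$ is built from $\equiv$ using connectives and quantifiers. The characterisations of $=$ and $\ext$ at product and function types are then a matter of unfolding the corresponding clauses and matching them, using \reflemm{basicpropofapartness}(1) to discard the spurious cases where an element is compared with itself, and its symmetry and cotransitivity parts to transport realizers between components.

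Finally, the key axiom is $\EXT$, and its validity is exactly what the second component of $(\sigma\to\tau)^+$ and the second conjunct of ${\rm dom}_{\sigma\to\tau}$ were designed to secure. Translating, one must derive $\lnot\exists w:\tau^-.\,{\rm app}_\tau(f^\alpha * x^\alpha, f^\alpha * y^\alpha, w)$ from ${\rm dom}_{\sigma\to\tau}(f^\alpha)$, ${\rm dom}_\sigma(x^\alpha)$, ${\rm dom}_\sigma(y^\alpha)$ and $\lnot\exists z:\sigma^-.\,{\rm app}_\sigma(x^\alpha, y^\alpha, z)$. Given such a $w$, the second conjunct of ${\rm dom}_{\sigma\to\tau}(f^\alpha)$ (instantiated at $u = x^\alpha$, $v = y^\alpha$) yields ${\rm app}_\sigma(x^\alpha, y^\alpha, (\snd\,f^\alpha)x^\alpha y^\alpha w)$, contradicting the assumption that $x^\alpha$ and $y^\alpha$ are not apart. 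I expect the main obstacle to be not this last step, which is short, but the bookkeeping of the previous paragraph: verifying at compound types that a realizer extracted from a proof is an \emph{actual} realizer whenever its inputs are, and keeping the $\sigma^+/\sigma^-$ and ${\rm dom}/{\rm app}$ polarities straight across products and nested function types. The properties collected in \reflemm{basicpropofapartness} --- irreflexivity, symmetry and cotransitivity of ${\rm app}$ --- are precisely the tools that make this bookkeeping go through.
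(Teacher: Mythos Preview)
Your proposal is correct and follows exactly the approach the paper indicates: the paper's proof is the single line ``By induction on the derivation of $\heha \vdash \varphi$,'' and what you have written is a careful unpacking of precisely that induction, with the logical rules handled via the substitution lemmas and the non-logical axioms discharged using \refprop{alphatranslationcombinators}, \reflemm{basicpropofapartness}, and the defining clauses in Table~\ref{tab:auxdefinitions}. Your treatment of the key cases (the $=_0$/$\equiv_0$ axiom via decidability, the product and function clauses for $=$ and $\ext$, and $\EXT$ via the second conjunct of ${\rm dom}_{\sigma\to\tau}$) is accurate.
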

\begin{proof}
    By induction on the derivation of $\heha \vdash \varphi$.
\end{proof}

\begin{theo}{alphawitnessesconvext} $\ha \vdash \CEXT^\alpha$. 
\end{theo}
\begin{proof}
    Recall that
    \begin{align*} \CEXT: \quad \exists Z. \, \forall^\ext f:(\sigma \to 0) \to (\tau \to 0). \, \forall^\ext x,y:\sigma \to 0. \, \forall^\ext v:\tau. \\ \big( \, fxv \not\equiv_0 fyv \to \ext(Zfxyv) \land x(Zfxyv)) \not\equiv_0 y(Zfxyv)  \, \big). \end{align*}
    Therefore we would like to have \[ Z * f * x * y * v \equiv \fst((\snd \, f)xy(\pair \, v \, 0)). \] 
    We can achieve this by putting
    \[ Z :\equiv \pair(\lambda f.\pair(\lambda x.\pair(\lambda y.\pair(\lambda v.\fst((\snd \, f)xy(\pair \, v \, 0))) \, 0 ) \, 0) \, 0) \, 0. \]
\end{proof}

\begin{rema}{alphatranslationmore}
    By induction on the type structure of $\sigma$, one can also show that the $\alpha$-translation interprets the principle $x =_\sigma y \to \ext_\sigma(x) \to \ext_\sigma(y)$.
\end{rema}

\begin{rema}{comparisonwithearlierpaper}
    The $\alpha$-translation that we presented in this section is a variation on the $\alpha$-translation from the appendix to our paper \cite{bergpassmann22}. A minor difference is that the clauses there are a bit more complicated, because they have been designed in such a way that ${\rm app}(x,y,z)$ implies ${\rm dom}(x)$ and ${\rm dom}(y)$. We have dropped this strictness requirement here (see \refrema{Elogic}).

    The crucial difference is that in the current paper the $\alpha$-translation is an interpretation of the hybrid system $\heha$, instead of $\eha$. Indeed, without a hybrid system, $\CEXT$ and \reftheo{alphawitnessesconvext} cannot even be formulated. Of course, $\eha$ can be embedded into $\heha$, as we showed in \reftheo{formehatoheha}, but the principle $\CEXT$ lies outside the image of this embedding. 
\end{rema}    

\section{Realizability and functional interpretations for hybrid arithmetic}

In this section we will indicate how interpretations like modified realizability and the Diller-Nahm functional interpretation can be extended to $\hha$. We will not discuss these matters in too much detail; also, we will assume familiarity with the standard treatment of these interpretations for $\ha$ (for which, see \cite{Troelstra344}, \cite{TroelstraVanDalen88ii} or \cite{kohlenbach08}).

\subsection{Modified realizability} Kreisel's modified realizability interpretation can be seen as an interpretation of $\ha$ into itself. This can be extended to an interpretation of $\hha$ into itself by adding the following clauses:
\begin{eqnarray*}
    x \, \mr \, \ext(y) & :\equiv & \ext(y), \\
    x \, \mr \, y = z & :\equiv & y = z.
\end{eqnarray*}
This interpret also validates principles like $\EXT$ or $\CEXT$, assuming they hold in the interpreting system: in particular, $\mr$ can also be extended to an interpretation of $\heha$ into itself.

\subsection{A functional interpretation} It is also possible to set up a functional interpretation of $\hha$ into itself. For this we choose the version of $\hha$ without a primitive notion of extensional equality, using \refrema{onlyeqoftype0primitive}. In that case we only have $\ext$ as a new symbol, which we will interpret as itself. For our functional interpretation, we choose the Diller-Nahm variant of G\"odel's original Dialectica interpretation \cite{dillernahm74}: indeed, we have to, because $\ext$ is an atomic formula which is not decidable.\footnote{Not to mention that we have written this paper with $\ha$ as our base system, for which G\"odel's original interpretation does not work, because for higher types $\sigma$ the equality predicate $\equiv_\sigma$ cannot be shown to be decidable in $\ha$. Indeed, for G\"odel's Dialectica interpretation to work we need all atomic formulas to be decidable; the benefit of the Diller-Nahm variant is that it also works when this is not the case.}

It is not hard to see that in this way the Diller-Nahm variant also works as an interpretation of $\hha$ into itself. Note, however, that this interpretation need not interpret the formula
\[ x = y \to \ext(x) \to \ext(y). \]
This has been our main reason for excluding this axiom from $\hha$.

Let us what happens with extensionality, for which we take the formulation
\begin{align*} \EXT': \quad \ext_{(\sigma \to 0) \to (\tau \to 0)}(f) \to \ext_{\sigma \to 0}(x) \to \ext_{\sigma \to 0}(y) \to \\ \forallext u:\sigma. xu \equiv_0 yu \to \forallext v:\tau. fxv \equiv_0 fyv, \end{align*}
as in \refcoro{reformulationEXT}. The Diller-Nahm functional interpretation asks us in this case to find a functional $E$, which given $f: (\sigma \to 0) \to (\tau \to 0),x, y:\sigma \to 0$ and $v:\tau$ computes a finite list $Efxyv$ such that if $v$ is extensional and $xu \equiv_0 xu$ for every extensional $u$ in the list $Efxyv$, then $fxv \equiv_0 fyv$. If our interpreting system includes
\begin{align*} \CEXT: \quad \exists Z. \, \forall^\ext f:(\sigma \to 0) \to (\tau \to 0). \, \forall^\ext x,y:\sigma \to 0. \, \forall^\ext v:\tau. \\ \big( \, fxv \not\equiv_0 fyv \to \ext(Zfxyv) \land x(Zfxyv)) \not\equiv_0 y(Zfxyv)  \, \big), \end{align*}
then we can take $Efxyv$ to be the list consisting only of $Zfxyv$. Indeed, $\CEXT$ is self-interpreting, in that if we assume it in the interpreting system, then we can use that to show that the Diller-Nahm interpretation validates that principle as well. So, in short, we have a Diller-Nahm interpretation of $\hha + \CEXT$ into itself, while $\hha \vdash \CEXT \to \EXT$, as in \refcoro{fromstrongtoweakext}; therefore we can also interpret $\heha$ into $\heha + \CEXT$ using the Diller-Nahm interpretation.

\section{Conclusion}

We have introduced hybrid systems for arithmetic, $\hha$ and $\heha$, and shown how it can be used to formulate a strong extensionality principle, which we have dubbed converse extensionality. Inspired by Brouwer's notion of apartness, we have defined a proof-theoretic interpretation, which we have called the $\alpha$-translation, which can be used to eliminate this converse extensionality principle.

One potential application of this work is in proof-mining \cite{kohlenbach08}, in particular, in proof-mining arguments using extensionality principles. One standard way to treat these is by first eliminating them using the elimination of extensionality and then applying functional interpretation. This would correspond to going from $\eha$ to $\ha$ along the top of the following (non-commuting) diagram, combining elimination of extensionality (EE) and the (Diller-Nahm) functional interpretation (DN).
\begin{displaymath}
    \begin{tikzcd} 
        & & \ha \ar[dr, "{\rm DN}"] \\
        \eha \ar[r, "{\rm Th. 4.1}"] \ar[urr, "{\rm EE}", bend left = 20] & \heha \ar[ur, "{\rm Th. 4.3}"] \ar[dr, "{\rm DN}"] & & \ha \\
        & & \heha + \CEXT \ar[ur, "\alpha"]
    \end{tikzcd}
\end{displaymath}
The $\alpha$-translation provides an alternative to this, by allowing us to go along the bottom of the diagram above. Since the $\alpha$-translation of equality statements looks easier than their functional interpretation, this deserves further exploration.

In addition, the idea of a hybrid system to formulate and study hybrid principles should be more widely applicable. For instance, there are discontinuous functionals realizing continuity principles (\cite[Theorem 2.6.7]{Troelstra344} or \cite{troelstra77}), and one could also imagine non-majorizable functionals realizing majorizability statements. We believe this is another promising direction for future research.

\bibliographystyle{plain} \bibliography{bibliography}

\end{document}